\newtheorem{theorem}{Theorem}[section]
\newtheorem{prop}[theorem]{Proposition}
\newtheorem{lemma}[theorem]{Lemma}
\newtheorem{definition}[theorem]{Definition}
\newtheorem{note}[theorem]{Note}
\newtheorem{corollary}[theorem]{Corollary}
\newtheorem{example}[theorem]{Example}
\newcommand{\A}{\mathcal{A}}
\newcommand{\N}{\mathbb{N}}
\newcommand{\cL}{\mathcal{L}}
\newcommand{\vp}{\varphi}
\newcommand{\cP}{\mathcal{P}}
\newcommand{\hf}{\hfill$\Box$} 
\newcommand{\pa}{\partial}
\newcommand{\lv}{\left\vert}
\newcommand{\rv}{\right\vert}
\newcommand{\be}{\begin{equation}}
\newcommand{\ee}{\end{equation}}
\newcommand{\dd}{,{\dots},}
\newcommand{\al}[1]{a_{[\alpha_{#1}]}}
\newcommand{\vl}[1]{V_{[\alpha_{#1}]} f_t}
\newcommand{\la}{\langle}
\newcommand{\ra}{\rangle}
\newcommand{\V}{\mathbf{V}}
\newcommand{\X}{\mathbf{X}}
\newcommand{\Y}{\mathbf{Y}}
\newcommand{\am}{_{\A_m}}
\newcommand{\R}{\mathbb{R}}
\newcommand{\RR}{\mathfrak{R}^{\lv \A_m\rv}}
\begin{document}

\title{Pointwise gradient bounds for degenerate semigroups (of UFG type)}
\author{ D. Crisan and M. Ottobre}

\address{\noindent \textsc{Dan Crisan, Department of Mathematics, Imperial College London, Huxley Building, 180 Queen's Gate,  London SW7 2AZ, UK}} 
\email{d.crisan@imperial.ac.uk}
\address{\noindent \textsc{Michela Ottobre, Department of Mathematics, Heriot-Watt University, Edinburgh EH14 4AS, UK}} 
\email{m.ottobre@hw.ac.uk}


\begin{abstract}
In this paper we consider diffusion semigroups generated by second order differential operators of degenerate type. The operators that we consider {\em do not}, in general, satisfy the H\"ormander condition and are {\em not} hypoelliptic. In particular, instead of working under the H\"ormander paradigm,   we consider  the so-called  UFG condition, introduced by Kusuoka and Strook in the eighties. 
 The UFG condition  is weaker than the uniform H\"ormander condition, the smoothing effect taking place only in certain directions (rather than in every direction, as it is the case when the H\"ormander condition is assumed). Under the UFG condition, Kusuoka and Strook  deduced  sharp \emph{small} time asymptotic bounds for the derivatives of the semigroup in  the directions where smoothing occurs. In this paper, we study the    \emph{large} time asymptotics for the gradients of the diffusion semigroup in the same set of directions and under the same UFG\ condition.       In particular, we identify conditions under which the derivatives of the diffusion semigroup in the smoothing directions decay exponentially in time.  This paper constitutes therefore a stepping stone in the analysis of the long time behaviour of diffusions which do not satisfy the H\"ormander condition. 
\vspace{5pt}
\\
{\sc Keywords.} Diffusion Semigroups, Parabolic PDE; Uniformly Finitely Generated Condition; Derivative Estimates; Exponential Bounds, Long time Asymptotics. 
\vspace{5pt}
\\
{\sc AMS Classification (MSC 2010).} 60H10, 35K10,  35B35, 35B65

\end{abstract}

\date{18 April}

\maketitle

\section{Introduction}
Consider the stochastic differential equation (SDE) in $\R^N$
\be\label{SDE}
X_t=X_0 + \int_0^t V_0(X_s) ds + \sqrt{2} \sum_{i=1}^d\int_0^t  V_i(X_s) \circ  dW^i (s),
\ee
where $V_0,\ldots,V_d$ are smooth  vector fields on $\R^N$, $\circ$ denotes Stratonovich integration and, for each $i$,  $W^i(t)$ is an $N$-dimensional standard Brownian motion.  
The Markov semigroup $\{\cP_t\}_{t\ge 0}$ associated with the SDE \eqref{SDE} is defined on the set $C_b$ of continuous and bounded functions, as
\be\label{semigroup}
(\cP_t f)(x):= \mathbb{E} \left[f(X_t \vert X_0=x) \right].
\ee
We recall that, given a vector field $V$ on $\R^N$, we can think of $V$ both as a vector-valued function on $\R^N$ and as a first order differential operator on $\R^N$:
\be\label{notvectoper}
V=(V^1(x), V^2(x) \dd V^N(x)) \quad \mbox{ or }
\quad V= \sum_{j=1}^NV^j(x)\pa_j, 
 \quad x \in \R^N, \pa_j= \pa_{x^j},
\ee
and we shall do so throughout the paper. With this notation, 
the Kolmogorov operator associated with the semigroup $\cP_t$ is the second order differential operator given  on smooth functions by 
\be\label{gen1}
\cL=V_0+\sum_{i=1}^{d}V_i^2. 
\ee
The study of Markov semigroups associated with SDEs of the form \eqref{SDE} has a long history and the literature on the matter is vast. Most of such literature deals with the case in which the operator $\cL$ is elliptic or hypoelliptic; more specifically, a large body of work has been dedicated to the study of the diffusion semigroup \eqref{semigroup} in the case in which the vector fields $V_0,\ldots,V_d$ satisfy the  {\em H\"{o}rmander condition}, in one of its many forms. As is well known, under the (parabolic) { H\"{o}rmander condition}, the transition probabilities of the semigroup $\cP_t$ have a smooth density; furthermore,  $\cP_tf$ is differentiable in every direction and $u(t,x):=(\cP_t f)(x)$ is a classical solution of the Cauchy problem
\begin{align*}
 \pa_t u(t,x) &=\cL u(t,x)\\
 u(0,x)& =f(x).
\end{align*}

In the present  paper we will  relax the hypoellipticity  assumption and work in the setting in which the vector fields $V_0,\ldots,V_d$ satisfy a weaker condition, the so-called UFG condition. The acronym UFG stands for {\em Uniformly Finitely Generated}.  Informally, denoting by $C_b^\infty({\mathbb R}^N)$ the set of smooth bounded functions with bounded derivatives,   this condition states that the $C_b^\infty({\mathbb R}^N)$-module $\mathcal W$ generated by the vector fields $\{V_i,i=1,...,d\}$ within the Lie algebra generated by  $\{V_i,i=0,1,...,d\}$ is finite dimensional. In particular, we emphasize that the UFG condition does not require that the vector space $\{W(x)|W\in \mathcal{W}\}$ is homeomorphic to ${\mathbb R}^N$  for any $x\in {\mathbb R}^N$; indeed,  the dimension of the
space $\{W(x)|W\in \mathcal{W}\}$ is not even required to be constant 
over $\mathbb{R}^N$.  Hence, in this sense, the UFG condition is   weaker than the H\"{o}rmander condition. We give a precise (and easier to check) statement of the UFG condition in Section \ref{sec2}, see Definition 2.1.

In a series of papers \cite{{KusStr82},{KusStr85},{KusStr87}, Kus03}, Kusuoka and Stroock have analyzed the smoothness properties of diffusion semigroups $\{\cP_t\}_{t\ge 0}$ associated with the stochastic dynamics \eqref{SDE} when the vector fields $\{V_i,i=0,1,...,d\}$ satisfy the UFG condition.  In particular they showed that, under the UFG condition, the semigroup $\cP_t$ is no longer differentiable in the direction $V_0$; however it is still differentiable in the direction $\mathcal{V}:=\pa_t-V_0$ and therefore a rigourous PDE analysis  can still be built starting from the stochastic dynamics \eqref{SDE}. In this case one can indeed prove that for  every $f\in C_b$, the function $u(t,x):=(\cP_t f)(x)$ is a classical solution of the Cauchy problem
\begin{equation}
\left\{ 
\begin{array}{rl}
\mathcal{V} u(t,x) &=\sum_{i=1}^{d}V_i^2 u(t,x)\\
 u(0,x)& =f(x).
 \end{array}
\right.\label{linearPDE}
\end{equation}
More precisely, $u$ is twice continuously differentiable  in the directions of the vector fields $V_i$, $i=1,...,d$ and once continuously differentiable in the direction ${\mathcal V}_0=\partial_t-V_0$, when viewed as a function 
$(t,x)\mapsto u(t,x)$ over the  product space 
$(0,\infty)\times {\mathbb R}^d$ (the notion of classical solution for the PDE \eqref{linearPDE} and further background material are gathered in the Appendix).

This fundamental result was obtained by using   probabilistic methods based on the use of the Malliavin calculus (see \cite{Crisan,Nee}). The small time asymptotics  of  $(\cP_t f)(x)$ constitutes the theoretical backbone  for the development of a new class of algorithms, termed {\em cubature methods}, introduced by  Kusuoka, Lyons, Ninomiya and Victoir in the last ten years \cite{cub1, cub2, cub3}. Such algorithms, which work under the UFG condition, provide high order approximations of the law of the solutions of SDEs (and therefore can be used to compute statistical quantities of interest) and are faster than their classical counterparts, see \cite{Crisan}. The study of UFG-diffusions has therefore opened interesting and promising research avenues both in the field of PDE theory and in the field of stochastic simulations.

The papers \cite{{KusStr82},{KusStr85},{KusStr87}} introduce the UFG condition in the context of the theory of diffusion semigroups. However  related conditions had already independently appeared,  in a completely different setting, in the work of Hermann \cite{Hermann}, Lobry \cite{Lobry} and Sussman \cite{Sussman}. In these works, such a condition was considered for control theoretical purposes. More details on the nature of the UFG condition will be given in Section \ref{sec2}.

 Under the UFG condition,  Kusuoka and Strook  proved sharp estimates on  {\em short-time} behaviour of  the semigroup $\cP_t$. Further work on the subject was carried out in \cite{Nee}, where a wealth of results regarding the short-time asymptotycs are derived. To the best of our knowledge nothing is known so far about the {\em long-time} behaviour of the semigroup under the UFG condition. In this paper, we provide the first step towards understanding the   long-time asymptotics of this class of (possibly) degenerate diffusions; in particular, we obtain {\em pointwise} estimates on the time-behaviour of the (space) derivatives of the function $u(t,x)=(\cP_tf)(x)$.
This is the first result concerning the long-time behaviour of UFG semigroups.
The main result of the paper can be informally stated as follows (see Theorem \ref{lemcond1} for a precise statement) \\[2mm]
{\bf Theorem.} {\it If the vector fields $\{V_i,i=0,1,...,d\}$ satisfy both the UFG condition and some quantitative assumption (the ``obtuse angle condition " \eqref{dilcon}) then,  for any bounded continuous function $f$ (not necessarily smooth), any $t_0\in (0,1)$ and any vector field, $V,$ belonging to the $C_b^\infty({\mathbb R}^N)$-module $\mathcal W$, there exist constants  $c_{t_{0}},\lambda>0$ such that   }
\[
| V\cP_tf(x)| ^2 \leq c_{t_{0}} e^{-\lambda t} \qquad {\mbox{for all }} x\in{\mathbb R}^N \ \ \mbox{and all}\quad t>t_{0}.\\[5mm]
 \]
 We emphasize that the UFG condition alone does not suffice to ensure the exponential decay of the coefficients. For a simple counterexample take the one-dimensional Ornstein-Uhlenbeck process with positive drift constant $a>0$. Then the semigroup is uniformly elliptic (hence it satisfies the UFG condition) but one has $\partial_x \cP_t f=e^{at}\cP_t (\partial_x f)$ (see also Note \ref{note41} on this point).


   From a technical point of view, the methods we use in this paper are analytic; indeed, the  strategy we use  to prove our main result,  Theorem \ref{lemcond1},  is a variation of the classic approach established by Bakry   (see \cite{BE,Bakry}) to deduce exponential decay estimates and is similar to the approach adopted by Dragoni, Kontis and Zegarli\'nski in \cite{DragKonZeg}. We defer to  Note  \ref{letbak}  a more careful comparison with this strand of the literature.  Here we just emphasise the pointwise nature of the above inequality. It is indeed customary to obtain bounds for the derivatives of semigroups in $L^p$ spaces weighted by an appropriate invariant measure. This is not possible here, in absence of an obvious invariant measure to exploit. 

To summarize,  the aim of this paper is twofold: i) first, we move another step forward in the Kusuoka-Stroock programme and we produce results that are applicable to the study of cubature methods; ii) second, we extend the classic semigroup approach of Bakry, which was introduced in the context of elliptic diffusions and then applied to hypoelliptic processes, to semigroups which are more general than hypoelliptic.  In particular, regarding the latter point, the estimates obtained in this paper, together with the mentioned control-theoretical results of Herrmann, Lobry and Sussman (\cite{Hermann, Lobry, Sussman}), will form the stepping stone for future work on the ergodic theory for SDEs with generator which does not necessarily satisfy the H\"ormander condition (Corollary \ref{corcontrol} is a simple example in this spirit).  On a related note, we would like to emphasize that some commonly used diffusion processes do not satisfy the H\"ormander condition, but satisfy the UFG condition; the simplest of such examples is Geometric Brownian motion. Another important motivation for the current work is to provide the basis of the asymptotic (in time) analysis of the error incurred by the high order numerical approximations produced by cubature methods.

The paper is organized as follows: in Section \ref{sec2} we introduce the UFG condition and the necessary notation. In Section \ref{sec3} we present a version of the classical Bakry technique, adapted to our context. In Section \ref{sec4} we present our main results concerning the exponential decay of the derivatives of the semigroup and explain how such estimates can be obtained by employing the techniques presented in Section \ref{sec3}. In Subsection \ref{subs41} we show one way of using our estimates to obtain information on the behaviour of the semigroup itself. More detailed results in this direction will be the object of future work. In Section \ref{sec5} we gather all the proofs of the results of Section \ref{sec4}.  

\section{The UFG condition and notation}\label{sec2}
Fix $d \in \N$ and let $\A$ be the set of all $n$-tuples, of any size $n \geq 1$,  of integers of the following form 
$$
\A:= \{\alpha=(\alpha^1 \dd \alpha^n), n \in \N : \alpha^j\in \{0, 1 \dd d\} {\mbox{ for all } j \geq  1} \}\setminus \{(0)\}\,.
$$
For the sake of clarity, we stress that all $n$-tuples of any length $n\geq 1$ are allowed in $\A$,  except the trivial one, $\alpha=(0)$ (however $\alpha=(j)$ belongs to $\A$ if $j\in\{1\dd d\}$). 
We endow $\A$ with the product 
$$
\alpha \ast \beta:= (\alpha^1 \dd \alpha^h, \beta^1 \dd \beta^{\ell}), 
$$
for any $\alpha=(\alpha^1 \dd \alpha^h)$ and 
$\beta=(\beta^1 \dd \beta^{\ell})$ in $\A$.  If  $\alpha$ is an element of $\A$, we define the {\em length} of $\alpha$, denoted by $\|\alpha\|$, the integer
$$
\|\alpha\|:= h+\mbox{card}\{i: \alpha_i=0\} , \qquad \mbox{if } \alpha=(\alpha^1 \dd \alpha^h) \,.
$$
For any $m \in \N, m\geq 1$, we then  introduce the sets
\begin{align*}
&\A_m=\{\alpha \in \A: \| \alpha\|\leq m   \}
\end{align*}
and if $B$ is any set,  $\lv B\rv$ will denote the cardinality of the set $B$.  \footnote{We hope that this does no create confusion when $x\in \R^N$, in which case $\lv x\rv$ is the euclidean norm of $x$.}

Given a vector field (or, equivalently, a first order differential operator) $V=(V^1(x), V^2(x),$ $..., V^N(x))$  on $\R^N$, we refer to the functions $\{V^j(x)\}_{1 \leq j \leq N}$ as to  the {\em components} or {\em coefficients} of the vector field.   We  say that a vector field on $\R^N$ is smooth or that it is $C^{\infty}$ if all the components $V^j(x)$, $j=1 \dd N$, are $C^{\infty}$ functions.
Given two differential operators $V$ and $W$, the commutator between $V$ and $W$ is defined as
$$
[V,W]:= VW -WV\,.
$$
Let now $\{V_i: i=0 \dd d \}$ be a collection of vector fields on $\R^N$ and let us define the following ``hierarchy" of operators:
\begin{align*}
V_{[i]} &:= V_i \qquad i=0, 1 \dd d\\
V_{[\alpha \ast i]} & := [V_{[\alpha]}, V_{[i]}], \qquad \alpha \in \A, i=0,1 \dd d\,.
\end{align*}
Note that if $\| \alpha\|=h$ then $\| \alpha \ast i \| = h+1$ if $i \in \{1 \dd d\}$ and $\| \alpha \ast i\|= h+2$ if $i=0$. If $\alpha \in \A$ is a multi-index of length $h$, with abuse of nomenclature we will say that $V_{[\alpha]}$ is a  differential operator of length $h$. 
 We can then define the space  $\mathfrak{R}_m$ to be  the space containing all the operators of the above  hierarchy, up to and including the operators of length $m$ (but excluding $V_0$):
\be\label{(R)}
\mathfrak{R}_m:=\left\{ V_{[\alpha]}, \alpha \in \A_m\right\}. 
\ee
Let 
$$
\mathrm{span} \{ \mathfrak{R}_m\}
:= \left\{\mbox{vector fields }V \mbox{ on } \R^N:  V= \sum_{\beta \in \A_m} \varphi_{\alpha,\beta} V_{[\beta]} (x) \right\},
$$
where the functions $\varphi_{\alpha, \beta}$ in the above belong to the set $C^{\infty}_V(\R^N)$ of  bounded smooth functions,  $\varphi_{\alpha, \beta}= \varphi_{\alpha, \beta}(x): \R^N \rightarrow \R$,   such that
\be\label{supphi}
\sup_{x \in \R^N}\lv V_{[\gamma_{(1)}]} \dots V_{[\gamma_{(n)}]} \varphi_{\alpha,\beta} \rv < \infty
\ee
for all $n$ and all $\gamma_{(1)} \dd \gamma_{(n)}, \alpha$ and $\beta$ in $\A_m$. 
With this notation in place we can now introduce the definition that will be central in this paper.

\begin{definition}[UFG Condition]\label{defufg}
Let $\{V_i: i=0 \dd d \}$ be a collection of 
smooth vector fields on $\R^N$ and assume that the coefficients of such vector fields have bounded partial derivatives (of any order). We say that the fields  $\{V_i: i=0 \dd d \}$ satisfy the UFG condition if there exists $m\in\N$ such that for any $\alpha \in \A$ of the form 
$$
\alpha = \alpha'\ast i, \qquad    \alpha' \in \A_m, \, i \in \{0 \dd d\}, $$ 
 there exist  bounded smooth functions $\varphi_{\alpha, \beta} \in C^{\infty}_V(\R^N)$ such that
$$
V_{[\alpha]}(x) = \sum_{\beta \in \A_m} \varphi_{\alpha,\beta} V_{[\beta]} (x) \,.
$$
\end{definition}
We emphasize that the set of vector fields appearing in the linear combination on the right hand side of the above identity, does not include $V_{0}$. 
\begin{example}\textup{
Consider the following first order differential operators on $\R^2$
$$
V_0=\sin x \,\partial_y \qquad V_1=\sin x\,\partial_x \,.
$$
Then $\{V_0,V_1\}$ do not satisfy the H\"ormander condition (e.g. there is always a degeneracy at $x=0$) but they do satisfy the UFG condition with $m=4$. If the role of the fields is exchanged, i.e. if we set 
$$
V_0=\sin x\,\partial_x ,  \qquad V_1=  \sin x \,\partial_y\,
$$
then $\{V_0, V_1\}$ still satisfy the UFG condition, this time with $m=1$ (indeed,  $[V_0,V_1]=\cos x V_1$).
{\hfill $\Box$}}
\end{example}

\begin{note}\textup{ Under the assumption \eqref{supphi} on the functions $\varphi$,   if the UFG condition holds for some $m \in \N$ then it also holds for any $n\geq m, n \in \N$.  In other words, if the UFG condition holds for some $m$ in $\N$ then for any $V_{[\gamma]}$ with $\|\gamma\|> m$ one has 
$$
V_{[\gamma]}= \sum_{\beta \in \A_m} \varphi_{\gamma,\beta} V_{[\beta]} (x) 
$$
for some  bounded  functions $\varphi_{\gamma,\beta}$. 
For this reason it is appropriate to remark that in the remainder of the paper, when we assume that ``the UFG condition is satisfied for some $m$", we mean the smallest such $m$.  
}{\hfill$\Box$} 
\end{note}

In this paper we will  consider diffusion semigroups $\{\cP_t\}_{t\geq 0}$ of the form \eqref{semigroup}; that is, we consider Markov semigroups associated with the stochastic dynamics \eqref{SDE}. In particular, we will be interested in studying the semigroup $\cP_t$ when the vector fields $\{V_0, V_1 \dd V_d\}$ satisfy the UFG condition. We recall that a  semigroup $\cP_t$  of bounded operators  is Markov if 
$$
\cP_t 1= 1 \qquad \mbox{and} \qquad \cP_t f \geq 0 \mbox{ when } f\geq 0\,,
$$ 
where, in the above, $1$ denotes the function identically equal to one. Denoting by $\| \cdot \|_{\infty}$ the supremum norm, the above implies  that if $\|f\|_{\infty}< \infty$ then $\|\cP_t f\|_{\infty} \leq \|f\|_{\infty}$, i.e. the semigroup is a contraction in the supremum norm.

The UFG condition is strictly weaker than the uniform H\"ormander condition (see \cite{{CrisanGhazali}}). However one can still prove that, when such  a condition is satisfied by the vector fields $\{V_0, V_1\dd  V_d\}$ appearing in the generator \eqref{gen1}, the semigroup $\cP_t$ still enjoys good smoothing properties: if $f(x)$ is continuous then $(\cP_t f)(x)$ is differentiable in all the directions spanned by the vector fields contained in $\mathfrak{R}_m$ (we recall that the set $\mathfrak{R}_m$ is defined in \eqref{(R)}).
\footnote{Actually, differentiability holds in all the directions spanned by the vector fields $V_{[\alpha]}, \, \alpha \in \A $. Notice that differentiability in the direction $V_0$ does not in general hold under the UFG condition. This is one of the major differences with the uniform H\"ormander condition, see \cite[Section 2.9]{Crisan}.} 

Moreover, whilst the function  $u(t,x):= (\cP_t f)(x)$ may not be differentiable in the direction  $V_0$ or in the time variable, it is   still differentiable in the direction $\mathcal{V}:=\pa_t-V_0$, and it is the unique classical solution of the Cauchy problem
\begin{align}
\mathcal{V} u(t,x)&=  \sum_{j=1}^d V_j^2 u(t,x) \label{E}\\
u(0,x)&=f (x), \nonumber
\end{align}
provided the initial datum $f$ is continuous and bounded. For the reader's convenience we include in Appendix the definition of classical solution for the PDE \eqref{E}. 

Suppose now, and for the remainder of this section,  
that the operators $\{V_0, V_1 \dd V_d\}$ satisfy the UFG condition for some $m>0$. We can then construct the vector field $\V$, containing all the vector fields (operators) $V_{[\alpha]}$,  $\alpha \in \A_m$:
\be\label{V}
\V:=\left(  V_1 \dd V_d,  \dots, V_{[\alpha]}, {\dots}  \right)\,.
\ee
The vector $\V$ has $\lv \A_m\rv$ entries; using the notation \eqref{notvectoper}, each entry (i.e. each vector $V_{[\alpha]}$,  $\alpha \in \A_m$) can be expressed as follows:
$$
V_{[\alpha]}= (V_{[\alpha]}^1 \dd V_{[\alpha]}^N).
$$
Therefore $\V$ can be rewritten as 
$$
\V=\left(  V_1^1\dd V_1^N \dd V_d^1 \dd V_d^N,  \dots, V_{[\alpha]}^1, {\dots}, V_{[\alpha]}^N, {\dots}  \right). 
$$
It is clear from the above that we can think of $\V$ as a function from $\R^N$ to $\R^{N \lv \A_m\rv}$. However we will most often think of $\V$ as a vector of operators rather than as a vector of vectors  and therefore we will adopt the notation \eqref{V}. 
More in general, the space of vectors  with $\lv \A_m\rv$ entries, where each entry is an operator in  span$\{\mathfrak{R}_m\}$, will be denoted by $\RR$. Clearly, $\V \in \RR$.

We emphasize  that if  $\X \in \RR$ , then $\X$ will always be denoted in bold font while the component  of $\X$ corresponding to the multi-index $\alpha$ is simply a differential operator and it is therefore  denoted by $X_{[\alpha]}$.  If $V_j$ is any first order differential operator,  we also write
$$
V_j\V= \left( V_j V_1 \dd V_j V_d, {\dots}, V_jV_{[\alpha]}, {\dots}\right)\,.
$$
Given a collection of strictly  positive numbers $\{a_{[\alpha]}\}_{\alpha \in \A_m}$ and  any $f(x):\R^N \rightarrow \R$ (smooth enough so that the expression  below makes sense), we can define the following quadratic form:
\be\label{Lyapfun}
(\Gamma f)(x):= \sum_{\alpha \in \A_m} 
a_{[\alpha]} \lv (V_{[\alpha]}f)(x)\rv^2\,, \qquad x \in \R^N.
\ee
If a multi-index $\alpha$ is of length $k$, we will denote it by $\alpha_k$ (when we want to emphasize   its length) and $V_{[\alpha_k]}$ will be the corresponding  first order operator of length $k$ (obviously, for a given $k\in \N$, there are many multi-indices of length $k$ and, correspondingly, many operators of length $k$). With this more detailed notation, the quadratic form $\Gamma$ can equivalently be expressed as
$$
(\Gamma f) (x)= \sum_{k=1}^m \sum_{\{\alpha_k ,\\ \| \alpha_k\|=k\}} \al{k} \lv \vl{k}(x) \rv^2\,.
$$
Also, if we define the following bilinear form on $\RR$
\be\label{scpr}
\la \X f, \Y f \ra\am:=\sum_{k=1}^m \sum_{\alpha_k :\\ \| \alpha_k\|=k} \al{k} (X_{[\alpha_k]}f) (Y_{[\alpha_k]}f),   \qquad \X, \Y \in \mathfrak{R}^{\lv \A_m \rv},   
\ee
where $f$ is any smooth enough function,
then the quadratic form $\Gamma$ can be rewritten as 
\be\label{equivnot}
\Gamma(f)= \| \V f\|\am^2\,,
\ee
where $\| \cdot \|\am$ is the (semi) norm induced by the bilinear form $\la \cdot, \cdot \ra\am$. 
We stress that the definition of the bilinear form $\la \cdot, \cdot \ra\am$ depends on the choice of the constants $\{a_{[\alpha]}\}_{\alpha \in \A_m}$. For $j \geq 0$ we also define the  linear mappings $\Lambda_j, \Lambda: \mathrm{span}\left\{ \mathfrak{R}_m \right\}\rightarrow \mathrm{span}\left\{ \mathfrak{R}_m \right\}$ as follows:
\be\label{lambdaj}
\Lambda_j V_{[\alpha]} = \left\{ 
\begin{array}{ll}
V_{[\alpha \ast j]} & \mbox{ if } \| \alpha \ast j \| \leq m\\
& \\
\sum_{\beta \in \A_m} \varphi_{\alpha \ast j, \beta } V_{[\beta]} & \mbox{ if } \| \alpha \ast j \| > m \, ,
\end{array}
\right.
\ee
and 
\be\label{lambda}
\Lambda:= \Lambda_0+ \sum_{j=1}^d \Lambda_j \Lambda_j \,.
\ee
With abuse of notation, we keep denoting by $\Lambda_j$ also the linear mapping $\Lambda_j: \RR \rightarrow \RR$ that acts on the component $[\alpha]$ of the vector $\V$ as follows:
$$
(\Lambda_j \V)_{[\alpha]}:= \Lambda_jV_{[\alpha]}.
$$
Analogous  use of notation holds for $\Lambda$ as well.
\begin{note}\label{note:indufg}\textup{
In view of Proposition \ref{Bakrystrategy} below, we remark that  all the objects defined so far, in particular the quadratic form $\Gamma$ and the maps $\Lambda$ and $\Lambda_j$, make sense, at least formally,  irrespective of whether the UFG condition holds. In other words, the integer $m$ appearing in the definitions of such objects could be any integer. Obviously, when the UFG condition holds with $m$, then all such definitions become meaningful for our purposes.  
}\hf
\end{note}
If the UFG condition holds with $m$, then we will denote by  $Pol$ the set of functions $f$ which are differentiable  in the directions $V_{[\alpha]}, \alpha \in \A_m,$ (but not necessarily in other directions) and such that 
\be\label{polcon}
\lv (V_{[\alpha]} f) (x)\rv^2 \leq \kappa (1+\lv x \rv^q),
\ee
for some $\kappa, q>0$. When, given a function $f\in Pol$, we want to stress the value of the constant $\kappa$ such that the above holds, we write $f \in Pol({\kappa})$.

We conclude this section by gathering some preliminary basic facts that we will  repeatedly  use  in the remainder of the paper and by presenting a simple example to illustrate the notation introduced so far. 
\begin{itemize}
\item If  $X,Y$ and $Z$ are  any three  first order differential operators then 
$$
[X, YZ] = [X,Y] Z + Y [X,Z]\,.
$$
\item If $\cL$ is the operator \eqref{gen1},  using the above we find that for any vector field $V_{[\alpha]}$:
\begin{align}
[V_{[\alpha]}, \cL]&= V_{[\alpha \ast 0]} 
 + \sum_{j=1}^d V_{[\alpha \ast j \ast j]}+ 2 \sum_{j=1}^d V_j V_{[\alpha \ast j]}  \label{vacoml}\\
&= \Lambda V_{[\alpha]}+  2 \sum_{j=1}^d V_j \Lambda_j V_{[\alpha]} \,. \nonumber
\end{align}
\end{itemize}

 \begin{example}[UFG-Heisenberg  Lie algebra] \label{exa:Heisenberg}\textup{We call this example the UFG-Heisenberg Lie algebra, as it is obtained by a modification of the so-called Heisenberg Lie algebra (which is the Lie algebra of vector fields that are invariant with respect to the action of the Heisenberg group on $\R^3$, see \cite{BLU}). More precisely, set $d=2$ and $N=3$  and consider the operators
\begin{align*}
 & X_1:=\pa_x-\frac{y}{2}\pa_z, \quad X_2:=\pa_y+\frac{x}{2}\pa_z,  \quad X_3:= [X_1, X_2]:= \pa_z, \\
& X_0:=xX_1+yX_2+2zX_3= x\pa_x + y \pa_y + 2z\pa_z \,.
\end{align*}
The Lie algebra generated by  $\{X_0, X_1, X_2\}$ is usually referred to as the Heisenberg Lie algebra. The vector fields $\{X_0, X_1, X_2\}$ satisfy the 
H\"ormander condition hence the operator $\cL=X_0+X_1^2+X_2^2$ is hypoelliptic on $\R^3$. If the above fields are slightly modified, we obtain new vector fields, $\{V_0, V_1, V_2\}$,  that no longer satisfy the H\"ormander condition, but satisfy the UFG condition instead. Indeed, 
 let again $d=2$ and $N=3$ and consider the operators
$$
V_0 := -k (x\pa_x+y \pa_y +2 z\pa_z), \quad V_1:= -y\pa_z, \quad V_2:=\pa_y+x \pa_z, \qquad k>0.
$$
The operators $\{V_0, V_1, V_2\}$ satisfy the UFG condition with $m=2$, as
\begin{align*}
& [V_1,V_0]=-kV_1,  \,\,\quad\qquad  [V_2, V_0]=-k V_2\\
& [V_1, V_2]=\pa_z = V_{12}, \qquad [V_{12},V_0]= -2kV_{12}, 
\qquad  [V_{12}, V_1]=[V_{12}, V_2]=0\,.
\end{align*}
 Therefore, in this example we have 
$\A_2:=\{1,2,  (1,2), (2,1)\}$ and $\textup{span}\{\mathfrak{R}_2\}=\textup{span}\{V_1, V_2, V_{[1\ast 2]}=:V_{12}\}$.  Because $V_{21}:= V_{[2 \ast 1]}=-V_{12}$, $V_{21}$ doesn't need to be in the list of the base fields of $\mathfrak{R}_2$ (for the same reason it can also be omitted in the definition of $\Gamma$ below, as the constants $a_1, a_2, a_{12}$ are anyway arbitrary).  Using the definition \eqref{Lyapfun},  the quadratic form $\Gamma$ associated with the UFG-Heisenberg group is 
$$
(\Gamma f) (x)= a_1  \lv V_1 f_t\rv^2+
a_2 \lv V_2 f_t\rv^2+ a_{12} \lv V_{12} f_t\rv^2.
$$
The vector $\V$ is 
$
\V=(V_1, V_2, V_{12})
$
 and the mappings $\Lambda_1$ and $\Lambda_2$ give
$$
\Lambda_1 \V = (0, - V_{12}, 0), \qquad 
\Lambda_2 \V = (V_{12}, 0, 0)\,,
$$
while for $\Lambda_0$ we have
$$
\Lambda_0 \V=(-kV_1, -kV_2, -kV_3).
$$
}
{\hfill$\Box$} 
\end{example}


\section{Preliminary results: a Bakry-type approach}\label{sec3}

In this section we consider Markov semigroups associated with operators $\cL$ of the form \eqref{gen1}, for a given set $\{V_0, V_1 \dd V_d\}$ of vector fields.  We recall that the class of functions $Pol$ has been defined immediately after Note \ref{note:indufg}. 

\begin{prop}\label{Bakrystrategy}
Let $f_t:=\cP_t f_0$ be the  diffusion semigroup defined in \eqref{semigroup}. 

\smallskip

\textbf{\textup{(a)}} Let $m$ be any positive integer and    assume the initial datum $f_0$ is a bounded smooth (in every direction) function such that $\| V_{[\alpha]} f_0\|_{\infty}< \infty$ for all $\alpha \in \A_m$.  Consider the quadratic form  $\Gamma$ defined in \eqref{Lyapfun}:
$$
(\Gamma f_t)(x):= \sum_{\alpha \in \A_m} a_{[\alpha]} \lv V_{[\alpha]} f_t(x) \rv^2 \,,
$$
for some strictly positive constants $\{a_{[\alpha]}\}_{\{\alpha \in \A_m\}}$ (to be chosen).  Suppose there exists  $\lambda>0$ such that the following inequality holds:
\be\label{glcon}
\frac{d}{ds} \cP_{t-s} \Gamma(f_s(x)) \leq - \lambda \cP_{t-s} \Gamma(f_s(x))\, \quad \mbox{ for any } x \in \R^N.
\ee
Then 
\be\label{expdec}
\Gamma(f_t) \leq e^{-\lambda t} \| \Gamma (f_0)\|_{\infty},  \quad \mbox{for all } t\ge0; 
\ee
therefore, 
$$
\lv V_{[\alpha]}f_t (x)\rv ^2 \leq \frac{1}{a_{[\alpha]}} {\| \Gamma (f_0)\|_{\infty}} e^{-\lambda t} \qquad {\mbox{for all }} \alpha \in \A_m, t\ge 0 \,.
$$
\smallskip

\textbf{\textup{(b)}}Suppose, in addition, that  the vector fields  $\{V_0 \dd V_d\}$ satisfy the UFG  condition (for some $m$).  In this case, if \eqref{glcon} holds when $f_0$ is smooth (and $\| V_{[\alpha]} f_0\|_{\infty}< \infty$), then the following holds for any $f_0\in Pol(\kappa)$:  for every open ball $\mathbb{B}(0,K)$ of radius $K$ and for all $\alpha \in \A_m$,
\be\label{local1}
\sup_{x \in \mathbb{B}(0,K)}\lv V_{[\alpha]}f_t (x)\rv ^2 \leq  \kappa \, c_K \,e^{-\lambda t}, 
\ee
 where $c_K>0$ is a constant dependent on $K$.

\smallskip

\textbf{\textup{(c)}} If  the vector fields  $\{V_0 \dd V_d\}$ satisfy the UFG  condition (for some $m$) and \eqref{glcon} is satisfied for any smooth initial datum, then then following holds when  $f_0$ is only continuous and bounded  (but not necessarily smooth):  for any $t_0\in (0,1)$ and any $K>0$
 there exists a constant $c_{t_0,K}>0$ such that   
\be\label{expdec3}
\sup_{x \in \mathbb{B}(0,K)}\lv V_{[\alpha]}f_t (x)\rv ^2 \leq c_{t_{0},K} \,  e^{-\lambda (t-t_0)}
\|f_0(x)\|_{\infty}^2 \qquad {\mbox{for all }} \alpha \in \A_m \ \mbox{and all}\quad t>t_{0}. 
\ee
Moreover, if the coefficients of the vector fields $\{V_0 \dd V_d\}$ are bounded, then the constant $c_{t_0,K}$ does not depend on $K$ and we have the uniform bound 
\be\label{expdec4}
\lv V_{[\alpha]}f_t (x)\rv ^2 \leq c_{t_{0}} \,  e^{-\lambda (t-t_0)} \|f_0(x)\|_{\infty}^2 \qquad {\mbox{for all }} \alpha \in \A_m \ \mbox{and all}\quad t>t_{0}. 
\ee
\end{prop}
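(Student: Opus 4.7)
The plan is to treat the three parts in order, built around a common Gronwall argument: apply the differential inequality $\phi_x'(s) \leq -\lambda \phi_x(s)$ from \eqref{glcon} to the function $\phi_x(s) := (\cP_{t-s}\Gamma(f_s))(x)$ on $[0,t]$. For part (a), integration yields the pointwise bound
\[
\Gamma(f_t)(x) \leq e^{-\lambda t}\, (\cP_t \Gamma(f_0))(x),
\]
and since $\cP_t$ is a Markov contraction on the supremum norm, $(\cP_t \Gamma(f_0))(x) \leq \|\Gamma(f_0)\|_{\infty}$, giving \eqref{expdec}. The individual coefficient bound then follows from positivity of the weights $a_{[\alpha]}$, since $a_{[\alpha]} |V_{[\alpha]} f_t(x)|^2 \leq \Gamma(f_t)(x)$.

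For part (b), the key is to keep the \emph{pointwise} inequality $\Gamma(f_t)(x) \leq e^{-\lambda t}(\cP_t \Gamma(f_0))(x)$ from the Gronwall step rather than immediately taking the sup, and then transfer it to $f_0 \in Pol(\kappa)$ by approximation. Choose a smooth cutoff $\chi_n$ equal to $1$ on $\mathbb{B}(0,n)$ and supported in $\mathbb{B}(0,2n)$, set $f_{0,n} := f_0 \chi_n$ (so that each $f_{0,n}$ is smooth and compactly supported, hence satisfies the hypotheses of (a)), apply the pointwise inequality to each $f_{0,n}$, and pass to the limit $n \to \infty$. On the left, the UFG differentiability of $\cP_t$ from continuous bounded data (Kusuoka–Stroock) yields $V_{[\alpha]}\cP_t f_{0,n}(x) \to V_{[\alpha]}\cP_t f_0(x)$ pointwise for every $\alpha \in \A_m$, so the left-hand side converges to $\Gamma(f_t)(x)$. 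On the right, the stochastic representation $(\cP_t\Gamma(f_{0,n}))(x) = \mathbb{E}[\Gamma(f_{0,n})(X_t^x)]$ combined with the polynomial growth \eqref{polcon} is dominated by $\kappa C\,\mathbb{E}[1+|X_t^x|^q]$, which standard moment bounds for \eqref{SDE} make uniformly bounded for $x \in \mathbb{B}(0,K)$, yielding \eqref{local1}.

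For part (c), split via the semigroup property $\cP_t f_0 = \cP_{t-t_0}(\cP_{t_0} f_0)$ for $t > t_0$. The Kusuoka–Stroock short-time asymptotics under UFG give that $g := \cP_{t_0} f_0$ is differentiable in every direction $V_{[\alpha]}$, $\alpha \in \A_m$, with a bound of the form $|V_{[\alpha]} g(x)|^2 \leq \kappa(t_0)(1+|x|^q)\|f_0\|_{\infty}^2$, so $g \in Pol$ with constant proportional to $\|f_0\|_{\infty}^2$. Applying part (b) to $g$ as initial datum over time $t-t_0$ then delivers \eqref{expdec3}. If in addition the coefficients of $\{V_0,\ldots,V_d\}$ are bounded, the small-time estimate becomes uniform in $x$ (effectively $q=0$) and the moment bounds in (b) are global, upgrading the local bound to \eqref{expdec4}.

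The \emph{main obstacle} is the limit passage in part (b): one needs to justify both the pointwise convergence of $V_{[\alpha]} \cP_t f_{0,n}(x)$ (relying on stability of the UFG smoothing estimates under approximation of the initial datum) and the control of cross terms in $\Gamma(f_{0,n})$ produced by derivatives of $\chi_n$. These cross terms live on the shell $\mathbb{B}(0,2n)\setminus \mathbb{B}(0,n)$, which the diffusion $X_s^x$ visits with probability vanishing as $n\to\infty$, so dominated convergence closes the argument. Part (c) then hinges on the explicit $t_0$-dependence of the Kusuoka–Stroock short-time bounds being available in the form stated.
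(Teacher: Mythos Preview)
Your treatment of parts (a) and (c) is correct and essentially identical to the paper's: Gronwall on $s\mapsto \cP_{t-s}\Gamma(f_s)(x)$ plus contractivity of $\cP_t$ for (a), and the semigroup splitting $\cP_t f_0=\cP_{t-t_0}(\cP_{t_0}f_0)$ together with the Kusuoka--Stroock short-time bounds \eqref{smoothingdet}--\eqref{smoothing1} for (c).

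There is, however, a genuine gap in your argument for (b). You set $f_{0,n}:=f_0\chi_n$ and assert that ``each $f_{0,n}$ is smooth and compactly supported, hence satisfies the hypotheses of (a)''. But by definition of the class $Pol$, the datum $f_0$ is only required to be differentiable in the directions $V_{[\alpha]}$, $\alpha\in\A_m$, and \emph{not necessarily in any other direction}. Multiplying by a smooth cutoff does not repair this: $f_0\chi_n$ is only as regular as $f_0$, so it is not ``smooth in every direction'' and part (a) does not apply to it. This is precisely the situation that arises in the application to (c), where the intermediate datum $g=\cP_{t_0}f_0$ is, under UFG but not H\"ormander, differentiable along the $V_{[\alpha]}$'s but typically not in the direction $V_0$ nor in time. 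Your subsequent discussion of cross terms from $\nabla\chi_n$ on shells is therefore beside the point: the approximation never enters the class where (a) is available.

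The paper's remedy (Lemma~\ref{densitylemma}) is to approximate not by cutoffs but by a density argument in the space $\mathcal{D}_V^{1,\infty}$: one takes $g_0^n\in C_V^\infty$ (genuinely smooth with bounded $V_{[\alpha]}$-derivatives) converging to $f_0$ locally in the norm $\|\cdot\|_\infty^{V,1}$, applies the pointwise inequality $\lv V_{[\alpha]}\cP_t g_0^n\rv^2\le e^{-\lambda t}\cP_t\Gamma(g_0^n)$ from (a), and then passes to the limit. On the left one uses the integration-by-parts formulae of \cite{Nee} to get $V_{[\alpha]}\cP_t g_0^n\to V_{[\alpha]}\cP_t f_0$; on the right one uses dominated convergence (the polynomial bound \eqref{polcon} can be arranged uniformly in $n$) and then estimates $\cP_t\Gamma(f_0)(x)\le \lv\A_m\rv\,\kappa\,\cP_t(1+|\cdot|^q)(x)$, which is polynomially bounded in $x$ by Proposition~\ref{prop:11:4:2}. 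Taking suprema over $\mathbb{B}(0,K)$ gives \eqref{local1}. If you want to repair your cutoff scheme, you would in addition need a mollification step; but then $V_{[\alpha]}$ does not commute with convolution (its coefficients are variable), and showing the right convergences essentially reproduces the closure argument anyway.
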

Before proving the above result we make the following remark, which we will use in the proof of Proposition \ref{Bakrystrategy}. We will make several comments on the above statement in Note \ref{letbak}. 
\begin{note}\label{noteregularity}\textup{If the initial datum $f_0$ is bounded and continuous and the UFG condition holds,  then $(\cP_t f_0)$ is differentiable in the directions $V_{[\alpha]}, \alpha \in \A_m $ (see \cite{KusStr82}-\cite{Kus03}). Because we are assuming that the vector fields  $\{V_0 \dd V_d\}$ are smooth, the semigroup is differentiable an arbitrary number of times  in such directions.   Moreover the following short time asymptotic holds: for any ball of radius $K$, $\mathbb{B}(0,K)$,  and for any $\alpha \in \A_m$, 
\be\label{smoothingdet}
 \sup_{x \in \mathbb{B}(0,K)} \lv V_{[\alpha]} (\cP_t f_0)(x) \rv \leq  \frac{\mathfrak{c}}{t^{\| \alpha \|/2}}
\sup_{x \in \mathbb{B}(0,K)} \left[ \left( 1+\lv x\rv^{\| \alpha\|}\right)  \right] \|f_0(x)\|_{\infty} ,
\ee
for some constant $\mathfrak{c}>0$ (which does not depend on $x,t$ or $f_0$). 
Details about the above short-time asymptotics (and many other  results of this type) can be found  in \cite{Nee} (see in particular \cite[pages 68-80]{Nee}). Furthermore,  when the vector fields $V_{[\alpha]}$ have bounded coefficients, the following holds:
\be\label{smoothing1}
\lv V_{[\alpha]} (\cP_t f_0)(x) \rv \leq \frac{\tilde{c}}{t^{\| \alpha \|/2}} \,.
\ee
}
\end{note}
\begin{proof}[Proof of Proposition \ref{Bakrystrategy}]
\textbf{\textup{(a)}} This is completely standard: By applying Gronwall's lemma, from  \eqref{glcon} we deduce 
\be\label{prov}
\cP_{t-s} \Gamma(f_s) \leq e^{-\lambda s} \,\cP_t \Gamma(f_0), 
\quad \mbox{for all } \, 0< s\leq  t \,.
\ee
Therefore, using   \eqref{prov} for $s=t$ and the contractivity  of the semigroup  $\cP_t$ in the supremum norm gives the result. Notice in particular that if $f_0$ is smooth in every direction then also $(\cP_tf_0)(x)$ is smooth in every direction (see Appendix); in particular, it is smooth in $t$ as well, hence all of the above is justified.\\
\textbf{\textup{(b)}} We prove this statement in the Appendix, see Lemma \ref{densitylemma}. \\
\textbf{\textup{(c)}} Using Note \ref{noteregularity}, notice that for any $t_0\in (0,1)$ the function $\cP_t f_0$ belongs to the set $Pol(\kappa)$; in particular, by \eqref{smoothingdet}, the constant $\kappa$ appearing in \eqref{polcon} is, for this function,  $\kappa= t_0^{-m} \mathfrak{c} \|f_0(x)\|_{\infty}$. Therefore, by   part (b),   for any fixed  $0<t_{0} <1$ and for any $t\ge t_0$,  we can write
\begin{align*}
\lv  V_{[\alpha]} (\cP_t f_0 (x))\rv^2 &=  
\lv  V_{[\alpha]} (\cP_{t-t_{0}} \cP_{t_0} f_0 (x))\rv^2 \leq e^{-\lambda(t-t_0)}c_{t_0,K}
\|f_0(x)\|_{\infty}.
\end{align*} 
If the coefficients of the $V_{[\alpha]}$'s  are bounded, then \eqref{smoothing1} gives \eqref{expdec4} by acting analogously to what we have just done.
 \end{proof}

\begin{note}\label{letbak}\textup{
 Proposition \ref{Bakrystrategy} part (a) provides a general framework  to deduce the exponential decay for the derivatives of diffusion semigroups;  part (a)  is just the classic  Bakry approach\cite{BE, Bakry},  readapted to our purposes. In particular:}
\begin{itemize}
\item \textup{Proposition \ref{Bakrystrategy} part (a) is {\em not} a smoothing result, it is  just a long time asymptotics. Indeed
in the statement of part (a)  we assumed that the initial datum $f_0(x)$ is a smooth function with bounded derivatives. This is to  make sense of the expression $(\Gamma f_0) (x)$ and to be able to take time-derivatives in \eqref{glcon}. Such a result is quite general and it is  independent of whether the UFG condition holds (see also Note \ref{note:indufg} in this respect). }
\item \textup{Once the exponential decay \eqref{expdec} is obtained for smooth initial data, one can use the semigroup property and the smoothing effects which are guaranteed to hold under the UFG condition (and quantified by the estimates \eqref{smoothingdet}- \eqref{smoothing1}) in order to prove exponential decay of the derivatives of the semigroup for any initial datum $f_0(x)$ which is just continuous and bounded.  This is the content of part ((b) and) (c) of Proposition  \ref{Bakrystrategy}. Therefore, in the proof of our main results we just need to focus on showing exponential decay for smooth initial data. 
}
\item \textup{The analysis used here is based on the adaption of the Bakry technique used in \cite{DragKonZeg}. The difference between the quadratic forms $\Gamma$ that we use here and  those  considered in \cite{{DragKonZeg}} is the appearance of the constants $\al{k}$. That is, the quadratic form used in \cite{{DragKonZeg}}  
 can be  obtained from ours by just setting $\al{k}=1$ for all $\al{k}$. Introducing the positive parameters $\al{k}$, which can be conveniently chosen,  allows us to have a better estimate for the decay rate $\lambda$ (see Note \ref{pc}). To the best of our knowledge, the idea of introducing such parameters first appeared  in \cite{{Herau2007}} and was then further developed in \cite{V}. However \cite{{Herau2007},V} work in weighted spaces, the weight being the invariant measure of the semigroup. Here there is no obvious invariant measure to exploit, hence we have to work in a pointwise setting, similar to \cite{DragKonZeg}.}
\end{itemize}
\hf
\end{note}

The result of Proposition \ref{Bakrystrategy} part (a) hinges only on proving \eqref{glcon}.  The following elementary lemma gives a sufficient condition to verify \eqref{glcon}.   
 Before stating the next lemma we observe that, with our assumptions on the coefficients of the SDE \eqref{SDE},  classic arguments show that the operator $\cL$ and the semigroup commute on a set of sufficiently smooth functions (say e.g. on the set $C_V^{\infty}$, defined just before Definition \ref{defufg}).

\begin{prop}\label{Bakrystrategy2} Assume the same setting of Proposition \ref{Bakrystrategy} part (a). 
 If there exists a real number $\lambda>0$ such that
\be\label{gcon}
\left( -\cL + \pa_t\right) \Gamma(f_t) \leq  - \lambda \Gamma(f_t)\quad 
\forall t>0
\ee
then \eqref{glcon}  holds. 
\end{prop}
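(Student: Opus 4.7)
The plan is to run the standard semigroup-derivative argument: set $g(s):=\cP_{t-s}\Gamma(f_s)$ and compute $dg/ds$ by the product rule, then invoke hypothesis \eqref{gcon} together with monotonicity of $\cP_{t-s}$.

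First I would fix $t>0$ and $x\in\R^N$, and treat $g(s)=(\cP_{t-s}\Gamma(f_s))(x)$ as a function of $s\in(0,t)$. Because $f_0$ is smooth with bounded derivatives of every order and the vector fields $V_0,\dots,V_d$ are smooth, the standard theory (recalled in the Appendix) guarantees that $f_s=\cP_s f_0$ is smooth in every direction with derivatives that grow at most polynomially, so $\Gamma(f_s)$ is a smooth function to which $\cL$ and $\cP_{t-s}$ may be applied, and on which they commute (as noted just before the statement). This also ensures that the formal time derivatives below are legitimate.

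Next I would differentiate $g$ in $s$. Two contributions arise: the $s$-dependence inside $\cP_{t-s}$ and the $s$-dependence inside $\Gamma(f_s)$. Writing $h_s:=\Gamma(f_s)$, the Kolmogorov forward/backward identities give
\begin{equation*}
\frac{d}{ds}\cP_{t-s}h_s \;=\; -\cP_{t-s}\cL h_s \;+\; \cP_{t-s}\,\partial_s h_s \;=\; \cP_{t-s}\bigl[(-\cL+\partial_s)\Gamma(f_s)\bigr].
\end{equation*}
Applying hypothesis \eqref{gcon} at time $s$ pointwise yields
\begin{equation*}
(-\cL+\partial_s)\Gamma(f_s)(y) \;\leq\; -\lambda\,\Gamma(f_s)(y)\qquad\text{for every }y\in\R^N.
\end{equation*}
Since $\cP_{t-s}$ is a Markov (in particular positivity-preserving) semigroup, applying $\cP_{t-s}$ to both sides of this pointwise inequality preserves the direction of the inequality. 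Consequently
\begin{equation*}
\frac{d}{ds}\cP_{t-s}\Gamma(f_s)(x) \;\leq\; -\lambda\,\cP_{t-s}\Gamma(f_s)(x),
\end{equation*}
which is precisely \eqref{glcon}.

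I do not expect any real obstacle here; the one point that deserves care is the justification that $\cL$ commutes with $\cP_{t-s}$ on $\Gamma(f_s)$ and that the pointwise differentiation in $s$ is licit. Both follow from the smoothness and bounded-derivatives hypothesis on $f_0$ together with the standing smoothness assumptions on the coefficients $V_0,\dots,V_d$, as already invoked in the proof of Proposition \ref{Bakrystrategy}(a); no UFG-type regularity is needed at this stage.
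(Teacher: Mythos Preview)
Your argument is correct and follows essentially the same route as the paper: differentiate $s\mapsto \cP_{t-s}\Gamma(f_s)$, use the commutation of $\cL$ with the semigroup to write the derivative as $\cP_{t-s}[(-\cL+\partial_s)\Gamma(f_s)]$, then apply hypothesis \eqref{gcon} together with positivity preservation of $\cP_{t-s}$. The only cosmetic difference is that the paper first writes the $-\cL\cP_{t-s}$ term and then invokes commutativity, whereas you write $-\cP_{t-s}\cL$ directly; either way the regularity justification is the one you identify.
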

\begin{proof}
This is again standard so we only sketch it. 
$$
\frac{d}{ds} \cP_{t-s} \Gamma(f_s(x))= - \cL\cP_{t-s} \Gamma(f_s)+
\cP_{t-s}\pa_s \Gamma(f_s)\,.
$$
We can now use  the fact that the semigroup commutes with its generator (on a set of sufficiently smooth functions) and the positivity preserving property of Markov semigroups, and therefore conclude the proof. 
\end{proof}

\section{Main Results: Long time behaviour of derivatives  of the semigroup} \label{sec4}
 If $X$ is a first order differential operator on $\R^N$,  $\cL$ is the operator \eqref{gen1} and $f_t(x):= (\cP_t f_0)(x)$ then 
\be\label{lem1}
(-\cL + \pa_t) \lv X f_t \rv^2 = -2 \sum_{j=1}^d \lv V_j X f_t\rv^2 + 2 
\left( [X, \cL] f_t \right) (X f_t) , 
\ee
whenever $f_0$ is smooth. The identity \eqref{lem1} is obtained by  using \eqref{E} and the fact that $X$ and all the $V_j$'s are first order differential operators (see \cite[Lemma 2.2]{MV_I}).
Recall that if a multi-index $\alpha$ is of length $k$ we will denote it by $\alpha_k$. 
In view of \eqref{gcon}, we use \eqref{lem1} to  calculate the following  
\begin{align*}
(-\cL + \pa_t) \Gamma (f_t) & \stackrel{\eqref{lem1}}{=} -2 \sum_{k=1}^m \sum_{\alpha_k} \al{k}\sum_{j=1}^d
\lv V_j \vl{k} \rv^2 + 2 \sum_{k=1}^m \sum_{\alpha_k}  \al{k} 
\left([V_{[\alpha_k]}, \cL] f_t \right) \left( \vl{k}\right)\\
& \stackrel{\eqref{vacoml}}{=} -2 \sum_{k=1}^m \sum_{\alpha_k} \al{k}\sum_{j=1}^d
\lv V_j \vl{k} \rv^2 
+ 4 \sum_{k=1}^m \sum_{\alpha_k} \al{k}\sum_{j=1}^d 
\left( V_j V_{[\alpha_k \ast j]} f_t\right) \left( \vl{k} \right)\\
&+2 \sum_{k=1}^m \sum_{\alpha_k}  \al{k}
\left([V_{[\alpha_k]}, V_0] f_t \right) \left( \vl{k}\right)\\
&+ 2 \sum_{k=1}^m \sum_{\alpha_k} \al{k}
\sum_{j=1}^d 
\left( V_{[\alpha_k \ast j \ast j]} f_t\right) \left( \vl{k} \right)\,.
\end{align*}

Notice that if $V_{[\alpha_k]}$ is a field of length $k$,  from  \eqref{vacoml} one can see that the commutator between 
$V_{[\alpha_k]}$ and $\cL$ will contain second order operators summed up with  first order operators of length at most $k+2$. For this reason, when we calculate the commutators $[V_{\alpha_{m}}, \cL]$ and $[V_{\alpha_{m-1}}, \cL]$, we can make use of the UFG condition and express such commutators in terms of fields of length at most $m$. 
 This fact will be repeatedly used in the proofs of Section \ref{sec5}. 

We now split the above expression as follows:

\be\label{user}
(-\cL + \pa_t) \Gamma (f_t)= \mathcal{S}(f_t)+ \mathcal{F}(f_t)
\ee
where
\begin{align}
\mathcal{S}(f_t) &:=
-2 \sum_{k=1}^m \sum_{\alpha_k} \al{k}\sum_{j=1}^d
\lv V_j \vl{k} \rv^2 
+ 4 \sum_{k=1}^m \sum_{\alpha_k} \al{k}\sum_{j=1}^d 
\left( V_j V_{[\alpha_k \ast j]} f_t\right) \left( \vl{k} \right) \label{Scom}\\
& \stackrel{\eqref{lambdaj}}{=}-2 \sum_{j=1}^d \| V_j \V f_t\|\am^2+ 4 \sum_{j=1}^d \la V_j \Lambda_j \V f_t, \V f_t  \ra\am \label{S}
\end{align}
and 
\begin{align}
\mathcal{F}(f_t) &:=
+2 \sum_{k=1}^m \sum_{\alpha_k}  \al{k}
\left([V_{[\alpha_k]}, V_0] f_t \right) \left( \vl{k}\right)\nonumber\\
&+ 2 \sum_{k=1}^m \sum_{\alpha_k} \al{k}
\sum_{j=1}^d 
\left( V_{[\alpha_k \ast j \ast j]} f_t\right) \left( \vl{k} \right) 
\nonumber 
\\
& \stackrel{\eqref{lambda}}{=} 2 \la \Lambda \V f_t, \V f_t \ra\am\,.\label{F}
\end{align}
Notice that $\mathcal{F}(f_t)$ contains only first order operators (vector fields), while $\mathcal{S}(f_t)$ contains second order as well as first order operators (see also the expression for $\mathcal{S}(f_t)$ at the beginning of the proof of Lemma \ref{lemcond1}, in particular the terms with ($\ast \ast$)).

\begin{theorem}\label{lemgenericassp} Let $m$ be a positive integer and $\cP_t f_0=:f_t$ be the semigroup associated with the SDE \eqref{SDE}, i.e. the semigroup \eqref{semigroup}.  With the notation introduced so far, 
assume  the following two conditions are satisfied by the vector fields $\{V_0, V_1 \dd V_d\}$ appearing in \eqref{SDE}:
\begin{itemize}
\item there exists a collection of strictly positive constants $\{a_{[\alpha]}\}_{\alpha \in \A_m}$ such that the corresponding bilinear form \eqref{scpr} satisfies  
\be\label{cond1}
\mathcal{S}(f_t)=-2 \sum_{j=1}^d \| V_j \V f_t\|\am^2+ 4 \sum_{j=1}^d \la V_j \Lambda_j \V f_t, \V f_t \ra\am
\leq \gamma \|  \V f_t\|\am^2,
\ee
for some constant $\gamma>0$ (possibly dependent on the collection $\{a_{[\alpha]}\}_{\alpha \in \A_m}$);

\item there exists $\mu> \gamma$ such that
\be\label{cond2}
\mathcal{F}(f_t)= 2 \la \Lambda \V f_t, \V f_t \ra\am \leq -  \mu \|  \V f_t\|^2\am \, ,
\ee
where $\la \cdot , \cdot \ra\am$ is the bilinear form defined by the same constants for which \eqref{cond1} holds. 
\end{itemize}
Then \eqref{gcon} holds with $\lambda= \mu- \gamma$. Therefore \eqref{expdec} holds for any smooth initial datum.
\end{theorem}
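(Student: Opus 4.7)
The proof should be short because essentially all of the heavy lifting has been carried out in Section \ref{sec3} and in the computation leading up to the decomposition \eqref{user}. My plan is simply to combine these ingredients in the right order.

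First I would take a smooth initial datum $f_0$ with $\| V_{[\alpha]} f_0 \|_\infty < \infty$ for all $\alpha \in \A_m$, so that $\Gamma(f_t)$ and its time derivative make sense and Proposition \ref{Bakrystrategy} part (a) is applicable. Then I would invoke the identity \eqref{user}, which was derived without any additional hypothesis beyond smoothness, to write
\[
(-\cL + \pa_t)\Gamma(f_t) = \mathcal{S}(f_t) + \mathcal{F}(f_t),
\]
with $\mathcal{S}$ and $\mathcal{F}$ given by \eqref{S} and \eqref{F} respectively.

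Next I would apply the two standing hypotheses \eqref{cond1} and \eqref{cond2} directly, both of which are stated for the same collection $\{a_{[\alpha]}\}_{\alpha \in \A_m}$ and the associated bilinear form $\la\cdot,\cdot\ra\am$. Summing, I obtain
\[
(-\cL + \pa_t)\Gamma(f_t) \leq \gamma \|\V f_t\|\am^2 - \mu \|\V f_t\|\am^2 = -(\mu - \gamma)\|\V f_t\|\am^2.
\]
Since by \eqref{equivnot} we have $\Gamma(f_t) = \|\V f_t\|\am^2$, this reads
\[
(-\cL + \pa_t)\Gamma(f_t) \leq -\lambda\, \Gamma(f_t), \qquad \lambda := \mu - \gamma > 0,
\]
which is precisely condition \eqref{gcon}.

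From here the conclusion is automatic: Proposition \ref{Bakrystrategy2} converts \eqref{gcon} into \eqref{glcon} with the same constant $\lambda$, and Proposition \ref{Bakrystrategy} part (a) then yields the pointwise exponential decay \eqref{expdec} for the chosen smooth initial datum. The only potential obstacle is bookkeeping — one has to make sure that the same collection $\{a_{[\alpha]}\}$ (and therefore the same norm $\|\cdot\|\am$) is used throughout, so that the two one-sided inequalities can be added cleanly; this is guaranteed by the phrasing of the hypotheses. No independent analytic work beyond what is already in Section \ref{sec3} is needed, so the theorem is really a clean packaging of the Bakry-type machinery under the quantitative assumptions \eqref{cond1}--\eqref{cond2}.
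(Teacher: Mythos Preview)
Your proposal is correct and follows exactly the paper's approach: the paper's proof simply combines \eqref{user}, \eqref{cond1}, \eqref{cond2} and \eqref{equivnot} to obtain $(-\cL+\pa_t)\Gamma(f_t)\leq(\gamma-\mu)\|\V f_t\|^2=-\lambda\Gamma(f_t)$, and then the conclusion \eqref{expdec} follows from Propositions \ref{Bakrystrategy2} and \ref{Bakrystrategy}(a). Your write-up is just a more explicit unpacking of the same one-line argument.
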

\begin{proof}[Proof of Theorem \ref{lemgenericassp}]
Trivially, from \eqref{user}, \eqref{S}, \eqref{F},  \eqref{cond1},  \eqref{cond2} and recalling the notation \eqref{equivnot}:
$$
(-\cL + \pa_t) \Gamma (f_t) \leq   (\gamma- \mu) \|  \V f_t\|^2
= - \lambda \Gamma (f_t) \,.
$$
\end{proof}
If we divide both sides of  \eqref{cond2} by $\|  \V f_t\|^2\am $, then it becomes clear that imposing  condition \eqref{cond2} is equivalent to requiring that (there exists a bilinear form on $\mathfrak{R}^{\lv \A_m\rv}\simeq \R^{N \lv \A_m\rv} $ such that) the ``angle" between the vectors $\Lambda \V f_t$  and $\V f_t$  is obtuse. 

We now establish conditions under which \eqref{cond1} and \eqref{cond2} hold.

\begin{theorem}\label{lemcond1}
Let  $\{V_i: i=0 \dd d \}$  be the vector fields appearing in \eqref{SDE}. Then the following holds:

\medskip

i) If the vector fields $\{V_i: i=0 \dd d \}$  satisfy  the UFG condition   for some $m \in \N$, then there exists a choice of the constants $\{a_{[\alpha]}\}_{\alpha \in \A_m}$ such that   \eqref{cond1} is satisfied. 

\bigskip

ii) Suppose  the assumption of the above point i) is satisfied  and assume that there exists a real number $\lambda_0>0$ such that, for every  $\alpha \in  \A_m$ and every smooth enough function $f$, 
\be\label{dilcon}
 \left(V_{[\alpha]}f\right) \left( [V_{[\alpha]}, V_0]f\right)  \leq  - \lambda_0
\lv V_{[\alpha]} f\rv^2 \,. 
\ee  
If $\lambda_0$ is big enough then \eqref{cond2} holds with $\mu=\lambda_0$. Hence there exists $\lambda>0$ such that  \eqref{expdec} holds for any smooth initial datum. Therefore, by   Proposition \ref{Bakrystrategy} part (c), if the initial datum $f_0$ is  continuous and bounded, then  for any $t_0\in (0,1)$ and any $K>0$
 there exists a constant $c_{t_0,K}>0$ such that   
$$
\sup_{x \in \mathbb{B}(0,K)}\lv V_{[\alpha]}f_t (x)\rv ^2 \leq c_{t_{0},K} \,  e^{-\lambda (t-t_0)}
\|f_0(x)\|_{\infty} \qquad {\mbox{for all }} \alpha \in \A_m \ \mbox{and all}\quad t>t_{0}. 
$$
 If the coefficients of the vector fields $\{V_0 \dd V_d\}$ are bounded, then 
\be\label{AAAAAAAAA}
\lv V_{[\alpha]}f_t (x)\rv ^2 \leq c_{t_{0}} \,  e^{-\lambda (t-t_0)} \|f_0(x)\|_{\infty} \qquad {\mbox{for all }} x \in \R^N, \alpha \in \A_m \ \mbox{and all}\quad t>t_{0}. 
\ee

\end{theorem}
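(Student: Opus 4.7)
The plan is to reduce Theorem \ref{lemcond1} to Theorem \ref{lemgenericassp} by verifying \eqref{cond1} and \eqref{cond2} in turn, and then to invoke Proposition \ref{Bakrystrategy}(c) to pass from smooth to merely continuous bounded initial data. As a warm-up I would fix a geometric choice of weights $a_{[\alpha_k]} = c^{k}$ for a small constant $c>0$ to be tuned later; the point is that the ratios $a_{[\alpha_k \ast j]}/a_{[\alpha_k]}$ are then uniformly bounded below by $c$, which is precisely what is needed to absorb second-order cross terms via a weighted Young inequality.

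For part (i), I would start from the expansion \eqref{Scom} of $\mathcal{S}(f_t)$ and split the cross term over each pair $(\alpha_k,j)$ according to whether $\|\alpha_k \ast j\| \leq m$ or $> m$. In the short case, $\Lambda_j V_{[\alpha_k]} = V_{[\alpha_k\ast j]}$ already lies in $\mathfrak{R}_m$, and a weighted Young inequality transfers the second-order term $V_j V_{[\alpha_k\ast j]} f_t$ exactly into the dissipative term $-2\sum_j \|V_j \V f_t\|\am^2$, leaving only a first-order residue controlled by $\|\V f_t\|\am^2$. In the long case, I would invoke UFG to write $V_{[\alpha_k \ast j]} = \sum_{\beta\in\A_m} \varphi_{\alpha_k\ast j,\beta} V_{[\beta]}$ and apply the Leibniz rule to $V_j$ acting on a product $\varphi \cdot V_{[\beta]} f_t$; the resulting second-order pieces $\varphi(V_j V_{[\beta]} f_t)$ are absorbed by the same Young trick (now with an extra factor of $\sup|\varphi|$), while the first-order pieces $(V_j\varphi)(V_{[\beta]} f_t)$ are bounded directly using \eqref{supphi}. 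Summing all the pieces gives \eqref{cond1} with some explicit $\gamma>0$ depending on $c$, $d$, $m$, and the UFG functions, but crucially \emph{not} on the parameter $\lambda_0$ of part (ii).

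For part (ii), I would split $\mathcal{F}(f_t)$ along the decomposition \eqref{lambda} of $\Lambda$ into $\Lambda_0 + \sum_j\Lambda_j^2$. The $\Lambda_0$-piece is handled directly by the obtuse-angle hypothesis \eqref{dilcon}: termwise, $2 a_{[\alpha_k]} (V_{[\alpha_k]} f_t)([V_{[\alpha_k]}, V_0] f_t) \leq -2\lambda_0 a_{[\alpha_k]} (V_{[\alpha_k]} f_t)^2$, and summing produces a clean gain of $-2\lambda_0 \|\V f_t\|\am^2$. For the $\sum_j \Lambda_j^2$-piece I would again split on whether $\|\alpha_k \ast j \ast j\| \leq m$, using UFG when necessary to rewrite the long commutators as bounded smooth combinations of operators in $\mathfrak{R}_m$; a single Cauchy--Schwarz then bounds this contribution by $C\|\V f_t\|\am^2$ for a constant $C$ independent of $\lambda_0$. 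Combining, $\mathcal{F}(f_t)\leq -(2\lambda_0 - C)\|\V f_t\|\am^2$, so choosing $\lambda_0 \geq \max(\gamma,C)$ yields \eqref{cond2} with $\mu=\lambda_0>\gamma$. Theorem \ref{lemgenericassp} then delivers \eqref{expdec} for smooth $f_0$ with $\lambda=\mu-\gamma>0$, and Proposition \ref{Bakrystrategy}(c) lifts this to the stated bounds for continuous bounded $f_0$; the uniform bound \eqref{AAAAAAAAA} follows from the bounded-coefficient case of that proposition.

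The main obstacle I anticipate is the coordinated choice of the weights $\{a_{[\alpha]}\}$: they must decrease fast enough in $\|\alpha\|$ so that all second-order cross terms in $\mathcal{S}$ can be Young-absorbed into $-2\sum_j \|V_j \V f_t\|\am^2$ with a uniform residue $\gamma$ independent of $\lambda_0$, yet not so fast that the UFG-induced first-order residues in the $\Lambda_j^2$-piece of $\mathcal{F}$ come to dominate the gain $-2\lambda_0$. The geometric scheme $a_{[\alpha_k]}=c^k$ with $c$ fixed small and $\lambda_0$ taken large afterwards decouples these two requirements, but verifying that both $\gamma$ and $C$ are genuinely independent of $\lambda_0$ will require carefully tracking the finitely many constants produced by the Leibniz splits and the UFG substitutions.
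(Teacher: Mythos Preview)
Your plan is correct and follows the paper's proof closely: both arguments expand $\mathcal{S}(f_t)$ and $\mathcal{F}(f_t)$, split each cross term according to whether the resulting multi-index stays in $\A_m$ or overshoots (invoking UFG plus Leibniz in the long case), and then use Young's inequality to absorb the second-order residues into the dissipative piece $-2\sum_j\|V_j\V f_t\|\am^2$, leaving a first-order remainder bounded by $\gamma\|\V f_t\|\am^2$. The decoupling you describe at the end---fix the weights first so that $\gamma$ and $C$ do not see $\lambda_0$, then take $\lambda_0$ large---is exactly the paper's logic.

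The one point of genuine divergence is the choice of weights. You take a \emph{decreasing} geometric sequence $a_{[\alpha_k]}=c^k$ with $c$ small; the paper instead fixes the weights by an \emph{increasing} recursion $a_{[\alpha_1]}>\mathcal{J}$, $a_{[\alpha_k]}>\mathcal{J}+a_{[\alpha_{k-1}]}^2$ (see \eqref{cc1}), where $\mathcal{J}$ counts the nontrivial UFG contributions at level $m$. With the paper's scheme a single uniform Young split (parameter $\epsilon=1$, as in \eqref{termsstar}) already makes every second-order coefficient in \eqref{t2} negative, because the dissipative weight at level $k$ dominates both the level-$(k-1)$ residue and the UFG residue automatically. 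Your geometric scheme also works, but it forces you to tune the Young parameter separately for each target $\beta$ in the UFG expansion (so that the small budget $a_{[\beta]}=c^{|\beta|}$ is not overrun); in exchange you avoid the super-exponential growth of the paper's weights. Either route yields finite $\gamma$ and $C$ independent of $\lambda_0$, so the conclusion via Theorem \ref{lemgenericassp} and Proposition \ref{Bakrystrategy}(c) goes through unchanged.
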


 
\begin{note} \label{note41}\textup{ Let us clarify the statement of Theorem \ref{lemcond1}. According to part i) of Theorem \ref{lemcond1}, if the UFG condition holds then one can fix a bilinear form $\la\cdot, \cdot \ra\am$ such that \eqref{cond1} holds. In the statement of part ii) of the  theorem  we intend \eqref{cond2} to be satisfied for the same bilinear form. An explicit estimate on how big $\lambda_0$ is will be given  in the proof, see \eqref{conlambda0}. Obviously the estimate \eqref{conlambda0} is quite general and can be made more precise when explicit knowledge of the functions $\varphi$'s appearing in the UFG condition is available. We also remark that \eqref{dilcon} is a slight generalization of the so-called {\em dilation condition}, which has been considered in the literature for elliptic and hypoelliptic semigroups (see \cite[Section 2]{DragKonZeg} and references therein). More  generally, \eqref{dilcon} replaces in a quantitative way the exact dilation structure of stratified Lie groups.  Still regarding \eqref{dilcon}, notice that one cannot expect that the UFG condition alone could yield exponential decay of the derivatives of the semigroup (as we have already pointed out in the introduction, if $\cL$ is uniformly elliptic then it satisfies the UFG condition, but not every elliptic dynamics has derivatives that decay exponentially fast), therefore some quantitative condition on the vector fields has to be imposed. 
}\hf
\end{note}
Let us present some examples of UFG generators that satisfy the assumptions of Theorem \ref{lemcond1}, in particular condition \eqref{dilcon}. 

\begin{example}[UFG-Gru\v sin Plane]\textup{Let $d=1$ and $N=2$, i.e. consider the operator $\cL= V_0+V_1^2$ on $\R^2$, with
$$
V_0= k x \pa_x, \quad V_1= x\pa_y, \qquad k>0.
$$
The fields $\{V_0, V_1\}$ satisfy the UFG condition with $m=1$, as
$[V_1, V_0]= -k V_1$. 
It is easy to see that \eqref{gcon} holds with $\lambda=2 k$ (for every $k>0$). Indeed, by direct calculation:
\begin{align*}
(-\cL + \pa_t) \lv V_1 f_t \rv^2 &\stackrel{\eqref{lem1}}{=}
-2 \lv V_1^2 f_t\rv^2+ 2 ([V_1, \cL] f_t)(V_1 f_t) \\
&=  -2 \lv V_1^2 f_t\rv^2 -2 k  \lv V_1 f_t \rv^2 \leq 
-2 k  \lv V_1 f_t \rv^2  \,.
\end{align*}
We name this example the UFG-Gru\v sin plane as it results from a small modification of the so-called UFG-Gru\v sin plane, given by the operators
$$
X_0:= x\pa_x + 2 y \pa_y, \quad X_1:= \pa_x , \quad X_2:=  x\pa_y. 
$$ 
It is easy to verify that the operator $X_0+ X_1^2 + X_2^2$ verifies the H\"ormander condition. 
}
\end{example}
\begin{example}\textup{
The operators $\{V_0, V_1, V_2\}$ defined in 
Example \ref{exa:Heisenberg} satisfy the assumptions of Theorem \ref{lemcond1}. In particular in this case  one can obtain the following result, the proof of which can be found in Section \ref{sec5}. 
}\hf
\end{example}



\begin{lemma}\label{lem:example}
Let d=2 and consider the operator $\cL$ of the form \eqref{gen1} acting on $\R^3$,  where the fields $V_0, V_1,V_2$ are those defined in Example \ref{exa:Heisenberg}.  With the notation introduced in Example \ref{exa:Heisenberg}, we have that for every $k>0$, \eqref{gcon} holds with $\lambda=k$, i.e.
$$
(-\cL+ \pa_t) \Gamma(f_t) \leq -k \Gamma (f_t). 
$$
\end{lemma}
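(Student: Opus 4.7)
The plan is to apply Theorem \ref{lemgenericassp} with a carefully chosen set of weights $\{a_1, a_2, a_{12}\}$, noting that for this example the structural quantities appearing in \eqref{S} and \eqref{F} have already been essentially computed in Example \ref{exa:Heisenberg}. First I will assemble $\mathcal{F}(f_t)$. Since $V_{12}=\partial_z$ commutes with both $V_1$ and $V_2$, the maps $\Lambda_1\Lambda_1$ and $\Lambda_2\Lambda_2$ annihilate $\mathbf{V}$, so $\Lambda\mathbf{V}=\Lambda_0\mathbf{V}=(-kV_1,-kV_2,-2kV_{12})$. Substituting into \eqref{F} gives the exact identity
\begin{equation*}
\mathcal{F}(f_t)=-2k\bigl(a_1|V_1f_t|^2+a_2|V_2f_t|^2+2a_{12}|V_{12}f_t|^2\bigr).
\end{equation*}

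Next I would write out $\mathcal{S}(f_t)$. The expression in \eqref{Scom} reduces, after using $\Lambda_1\mathbf{V}=(0,-V_{12},0)$ and $\Lambda_2\mathbf{V}=(V_{12},0,0)$ and the fact that $[V_{12},V_j]=0$ for $j=1,2$, to a sum of manifestly non-positive second-order quadratic terms together with exactly two ``dangerous'' cross terms of the form $-4a_2(V_{12}V_1f_t)(V_2f_t)$ and $4a_1(V_{12}V_2f_t)(V_1f_t)$. The key step is to absorb these cross terms into the non-positive contributions $-2a_{12}|V_jV_{12}f_t|^2$ by applying Young's inequality with weight $2a_{12}$; this yields
\begin{equation*}
\mathcal{S}(f_t)\le \frac{2a_1^2}{a_{12}}|V_1f_t|^2+\frac{2a_2^2}{a_{12}}|V_2f_t|^2.
\end{equation*}
All remaining second-derivative terms of the form $|V_iV_jf_t|^2$ are discarded since they are non-positive.

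Now I would tune the constants. Combining the two bounds shows that $\mathcal{S}(f_t)+\mathcal{F}(f_t)$ is dominated by
\begin{equation*}
\Bigl(\tfrac{2a_1^2}{a_{12}}-2ka_1\Bigr)|V_1f_t|^2+\Bigl(\tfrac{2a_2^2}{a_{12}}-2ka_2\Bigr)|V_2f_t|^2-4ka_{12}|V_{12}f_t|^2,
\end{equation*}
and to obtain the claim $-k\Gamma(f_t)$ it suffices to require each parenthesis to be at most $-ka_{[\alpha]}$. The simple choice $a_1=a_2=1$ and $a_{12}=2/k$ makes the first two coefficients equal to $-k$ exactly, while the last becomes $-4ka_{12}=-8\le -k\cdot a_{12}$, since $a_{12}=2/k$ gives $-k a_{12}=-2$. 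This verifies \eqref{cond1}--\eqref{cond2} jointly with $\mu-\gamma=k$, so Theorem \ref{lemgenericassp} yields the conclusion.

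I do not foresee a genuine obstacle: the structure is forgiving because $V_{12}$ is central with respect to $V_1,V_2$, which makes $\Lambda_j\Lambda_j\mathbf{V}=0$ and confines the cross terms to a two-dimensional interaction between $V_{12}$-derivatives and first-order derivatives. The only care needed is bookkeeping the signs and lengths of the multi-indices in \eqref{Scom} and ensuring the Young's constant $2a_{12}$ used on both cross terms is compatible with the available $-2a_{12}|V_jV_{12}f_t|^2$ budget; the choice $a_{12}\ge 2/k$ (not only the particular value $2/k$) suffices, leaving some slack should a slightly different presentation of the constants be preferred.
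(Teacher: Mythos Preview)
Your proof is correct and follows essentially the same route as the paper's: expand $(-\cL+\partial_t)\Gamma(f_t)$ via \eqref{user}/\eqref{lem1}, control the two cross terms $4a_1(V_2V_{12}f_t)(V_1f_t)$ and $-4a_2(V_1V_{12}f_t)(V_2f_t)$ by Young's inequality, and then choose the weights. The only cosmetic difference is the Young parameter---you absorb exactly into the $-2a_{12}|V_jV_{12}f_t|^2$ budget (leading to $a_1=a_2=1$, $a_{12}\ge 2/k$), whereas the paper takes $\epsilon=a_i$ and ends up with the constraints $a_i\ge 2/k$, $a_{12}>a_i^2$; both choices work for every $k>0$.
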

We include the proof of the above lemma in Section \ref{sec5} for two reasons: i)  to show on a simple example how the proof of Theorem  \ref{lemcond1}  works in practice, without all the cumbersome notation that one needs to prove the result in general; ii) to show that,   thanks to the freedom to choose the constants appearing in the definition of $\Gamma$ (see Note \ref{pc}), 
 the general lower bound  for $\lambda$ given in \eqref{conlambda0} can be improved when we explicitly know the functions $\varphi$'s deriving from the UFG condition. 


\begin{note} \textup{The quadratic form $\Gamma(f_t)$ includes the derivatives of the semigroup but not the semigroup $f_t$ itself. Therefore the results of this paper only give information on  the behaviour of the derivatives; in Subsection \ref{subs41} below we  use such results to obtain some (partial) information on  the asymptotic behaviour of the semigroup $f_t=\cP_tf$. An analogous observation  holds for the derivatives in the direction $V_0$. Notice that our result does not imply anything regarding the behaviour in the direction $V_0$, as $V_0$ is not contained in the definition of $\Gamma$. This is again a structural fact. Indeed, under just the UFG condition, one is not even guaranteed differentiability in the direction $V_0$, let alone decay, see \cite[Section 2.9]{Crisan}. However it was proved that under the so-called {\em $V_0$-condition} (see Definition \ref{vocond} below), the semigroup $\cP_t$ is differentiable in the direction $V_0$ as well. In this case our results cover such a direction as well. 
}\hf
\end{note}

\begin{definition}[$V_0$-condition]\label{vocond} With the notation introduced so far, we say that the $V_0$-condition is satisfied if   there exist functions $\varphi_{\beta} \in C_V^{\infty}$ such that
$$
V_0= \sum_{\beta \in \A_2} \varphi_{\beta}V_{[\beta]}.
$$
\end{definition} 
  \begin{corollary} Suppose that the assumptions of Theorem \ref{lemgenericassp} are satisfied. If the $V_0$ condition holds, then there exist positive constants $c, \lambda > 0$  such that 
$$
\lv V_0 f_t\rv ^2 \leq c e^{-\lambda t},
$$  
say for any smooth $f_0$.
  \end{corollary}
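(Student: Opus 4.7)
The plan is a direct combination of the $V_0$-condition with the exponential decay furnished by Theorem \ref{lemgenericassp}. First, I would observe that since the UFG condition holding for $m$ implies it also holds for every $n\geq m$ (see the Note following Definition \ref{defufg}), there is no loss of generality in assuming $m\geq 2$, so that $\A_2\subseteq \A_m$ and the conclusion of Theorem \ref{lemgenericassp} applies to every $V_{[\beta]}$ with $\beta\in\A_2$.

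Next, I would use the $V_0$-condition to write, pointwise,
\[
V_0 f_t(x) \;=\; \sum_{\beta\in\A_2} \varphi_\beta(x)\, V_{[\beta]} f_t(x),
\]
and apply Cauchy--Schwarz (in the finite sum over $\A_2$) to obtain
\[
|V_0 f_t(x)|^2 \;\leq\; |\A_2| \sum_{\beta\in\A_2} |\varphi_\beta(x)|^2\,|V_{[\beta]} f_t(x)|^2.
\]
Because $\varphi_\beta\in C_V^\infty$, each $\varphi_\beta$ is uniformly bounded on $\R^N$, so there is a constant $M>0$ with $\sup_{x,\beta}|\varphi_\beta(x)|\leq M$.

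Then I would invoke Theorem \ref{lemgenericassp}: for each $\beta\in\A_2\subseteq \A_m$ and any smooth initial datum $f_0$ with bounded $V_{[\alpha]}f_0$, we have
\[
|V_{[\beta]} f_t(x)|^2 \;\leq\; \frac{1}{a_{[\beta]}}\,\|\Gamma(f_0)\|_\infty \, e^{-\lambda t}.
\]
Combining the two displays yields
\[
|V_0 f_t(x)|^2 \;\leq\; \Big(|\A_2|\,M^2\,\|\Gamma(f_0)\|_\infty \sum_{\beta\in\A_2}\frac{1}{a_{[\beta]}}\Big) e^{-\lambda t} \;=\; c\,e^{-\lambda t},
\]
which is the desired bound, with $c$ depending on $f_0$ through $\|\Gamma(f_0)\|_\infty$ and on the structural constants $a_{[\beta]}$, $M$, $|\A_2|$.

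There is essentially no obstacle here: the only point worth flagging is the compatibility between the index set $\A_2$ in Definition \ref{vocond} and the index set $\A_m$ used throughout Section \ref{sec4}, which is handled by enlarging $m$ if necessary using the remark that the UFG condition propagates upwards in $m$. Everything else is a bounded linear combination estimate on top of the pointwise exponential decay already established.
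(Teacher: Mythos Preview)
Your approach is correct and is the argument the paper has in mind; the corollary is stated there without proof as an immediate consequence. One minor correction: the step ``without loss of generality $m\geq 2$'' does not work as written. While the UFG condition itself propagates to larger $m$, the hypotheses \eqref{cond1}--\eqref{cond2} of Theorem~\ref{lemgenericassp} are formulated for a specific choice of $m$ and of constants $\{a_{[\alpha]}\}_{\alpha\in\A_m}$, and these do not automatically transfer when $m$ is enlarged. The clean fix is to keep the given $m$: by the UFG condition, every $V_{[\beta]}$ with $\beta\in\A_2$ is already a bounded-coefficient combination of the $V_{[\gamma]}$, $\gamma\in\A_m$ (trivially so if $\|\beta\|\le m$, and via Definition~\ref{defufg} otherwise), so the $V_0$-condition yields $V_0=\sum_{\gamma\in\A_m}\tilde\varphi_\gamma V_{[\gamma]}$ with bounded $\tilde\varphi_\gamma$, after which your Cauchy--Schwarz estimate feeds directly into the conclusion of Theorem~\ref{lemgenericassp}.
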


Finally we observe, although without proof, that the same strategy  used in this paper  can be adapted to obtain estimates on the derivatives of any order along the semigroup. This can be done inductively (on the order of the derivative) using, at step $n$ of the induction, the quadratic form 
$$
(\Gamma^{(n)} f)(x):= \sum_{k=1}^n\sum_{\alpha^{(j)} \in \A_m} a_{[\alpha^{(k)}\dd {\alpha^{(1)}]}} \lv V_{[\alpha^{(k)}]} \dots V_{[\alpha^{(1)}]}  f_t(x) \rv^2.
$$
That is, the quadratic form $\Gamma^{(n)}$ contains all the derivatives of  order at most $n$, in all the directions contained in $\mathfrak{R}_m$. 
A similar  inductive procedure has  been used, for hypoelliptic semigroups of hypocoercive type, in \cite{mythesis, MV_I}.

\subsection{Decay of the semigroup}\label{subs41}
Let $x$ and $y$ be two points in $\R^N$ and, for some given $\alpha \in \A_m$, suppose there exists an integral curve of $V_{[\alpha]}$ joining $x$ and $y$. That is, suppose there exists $\eta(\tau):[0,1]\rightarrow \R^N$ such that
$$
\frac{d}{d\tau}\eta(\tau)=V_{[\alpha]} ( \eta(\tau)), \qquad \eta(0)=x, \,\eta(1)=y.
$$
We stress that in the above $V_{[\alpha]}$ has to be intended as a vector field rather than as a differential operator. More generally, we say  that $y$ is reachable from $x$, and write $x \sim y$, if there exists an integer $M>0$ and $M$ points in $\R^N$, $z_1 \dd z_M$, such that $z_1=x$, $z_M=y$ and for every $i=1\dd M-1$, there exists  an  $\alpha^{(i)} \in \A_m$ such that the integral curve of    $V_{[\alpha^{(i)}]}$ is well defined and joins $z_i$ with $z_{i+1}$.   The relation $\sim$ is an equivalence relation. We denote by $\mathcal{U}_x$ the set of points reachable from $x$ (clearly, if $y \sim x$ then $\mathcal{U}_x=\mathcal{U}_y$). 
 
\begin{corollary}\label{corcontrol}
 Let $\cP_t$ be the semigroup \eqref{semigroup} and assume for simplicity that the fields $V_{[\alpha]}$ have bounded coefficients.  Suppose that the assumptions of Theorem \ref{lemcond1} hold.   Then for any $f(x)$ continuous and bounded,   for any $x \in \R^N$ and for any $y \in \mathcal{U}_x$ there exists $\lambda >0$ such that
$$
\lv (\cP_tf)(x)- (\cP_tf)(y)\rv \leq c e^{-\lambda t }, \qquad \mbox{for all }t>0, 
$$
where $c>0$ is a constant independent of $t$. 

\end{corollary}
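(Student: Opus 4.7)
The plan is to reduce the corollary to a one-segment statement and then stitch segments together along the chain realising $x\sim y$. The fundamental observation is that for a continuous integral curve $\eta:[0,1]\to\R^N$ of $V_{[\alpha]}$ with $\eta(0)=x$ and $\eta(1)=y$, the function $\tau\mapsto(\cP_tf)(\eta(\tau))$ is $C^1$ on $[0,1]$ (by Note \ref{noteregularity}, $\cP_t f$ is differentiable in the direction $V_{[\alpha]}$ under the UFG condition, since $\alpha\in\A_m$) and, by the chain rule interpreted through the identification \eqref{notvectoper},
\[
\frac{d}{d\tau}(\cP_tf)(\eta(\tau))=\bigl(V_{[\alpha]}\cP_tf\bigr)(\eta(\tau)).
\]

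First, I would treat the single-segment case, assuming there exists $\alpha\in\A_m$ whose integral curve $\eta$ joins $x$ to $y$. Integrating the above identity from $0$ to $1$ and applying the uniform bound \eqref{AAAAAAAAA} from Theorem \ref{lemcond1} (here is where bounded coefficients matter, so that the estimate is pointwise on all of $\R^N$ and hence applicable along $\eta(\tau)$), one obtains, for any fixed $t_0\in(0,1)$ and all $t>t_0$,
\[
\bigl|(\cP_tf)(y)-(\cP_tf)(x)\bigr|\le\int_0^1\bigl|\bigl(V_{[\alpha]}\cP_tf\bigr)(\eta(\tau))\bigr|\,d\tau\le \sqrt{c_{t_0}}\,\|f\|_\infty\,e^{-\lambda(t-t_0)/2}.
\]
This yields the one-segment bound with decay rate $\lambda/2$ and a prefactor independent of $x$, $y$ and of the curve $\eta$ (the estimate is uniform in space precisely because of \eqref{AAAAAAAAA}).

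Next, I would pass from one segment to the general case by telescoping. By the definition of $y\in\mathcal U_x$, there are points $x=z_1,z_2,\ldots,z_M=y$ and multi-indices $\alpha^{(1)},\ldots,\alpha^{(M-1)}\in\A_m$ such that the integral curve of $V_{[\alpha^{(i)}]}$ joins $z_i$ to $z_{i+1}$. Applying the one-segment bound to each pair $(z_i,z_{i+1})$ and summing,
\[
\bigl|(\cP_tf)(x)-(\cP_tf)(y)\bigr|\le\sum_{i=1}^{M-1}\bigl|(\cP_tf)(z_i)-(\cP_tf)(z_{i+1})\bigr|\le (M-1)\sqrt{c_{t_0}}\,\|f\|_\infty\,e^{-\lambda(t-t_0)/2},
\]
which is the desired estimate with $c:=(M-1)\sqrt{c_{t_0}}\,\|f\|_\infty e^{\lambda t_0/2}$ and rate $\lambda/2$ (which I would then call $\lambda$ after relabelling).

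The part requiring the most care is not the arithmetic but verifying the regularity that lets the fundamental theorem of calculus be applied: one needs that $\cP_tf$ is differentiable in every direction $V_{[\alpha]}$, $\alpha\in\A_m$, along the curve $\eta$. This is exactly guaranteed under the UFG condition for $t>0$ even when $f$ is merely continuous and bounded, as recalled in Note \ref{noteregularity}; the only mild subtlety is that the identification of the directional derivative with $V_{[\alpha]}\cP_tf$ requires the integral curve $\eta$ to remain within the domain where $\cP_tf$ is $C^1$ in that direction, which is automatic because $\eta$ takes values in $\R^N$ and the differentiability statement is global. Once this is accepted, the rest of the argument is the routine telescoping above, and no further structural assumption beyond those of Theorem \ref{lemcond1} (together with boundedness of the coefficients, to invoke \eqref{AAAAAAAAA}) is needed.
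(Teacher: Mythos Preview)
Your proof is correct and follows essentially the same route as the paper: reduce to a single segment, differentiate $(\cP_tf)(\eta(\tau))$ along the integral curve to obtain $(V_{[\alpha]}\cP_tf)(\eta(\tau))$, integrate from $0$ to $1$, and invoke the uniform pointwise bound \eqref{AAAAAAAAA}; the general case then follows by telescoping along the chain $z_1,\dots,z_M$. The paper's version is terser (it simply asserts that the case $M=1$ suffices and references Note~\ref{notenotenote} for the differentiability along integral curves), but the argument is the same, including the implicit passage from $\lambda$ to $\lambda/2$ when taking the square root of \eqref{AAAAAAAAA}.
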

\begin{proof}
We just need to prove the result for $M=1$ (that is, when $y$ can be reached from $x$ moving along the integral curve of one of the $V_{[\alpha]}$'s).  For any fixed $t>0$, by definition of directional derivative we have 
$$
\frac{d}{d\tau}(\cP_tf)(\eta(\tau)) = (V_{[\alpha]} \cP_tf)( \eta(\tau))
$$
 (see also  Note \ref{notenotenote} in the appendix).  Integrating the above between 0 and 1 and using \eqref{AAAAAAAAA}  we obtain the result.      
\end{proof}

\section{Proofs of Main results}\label{sec5}
Throughout this section, if  $\varphi(x)$ is a function, we denote
$$
\bar{\vp}:= \sup_{x}\lv \vp(x) \rv \,.
$$
We also set
$$
\bar{\A}_m := \{\alpha \in \A: \| \alpha\|= m   \}. 
$$
 We make the (obvious) remark, that  
if the UFG condition holds for some $m \in \N$, then for all the multi-indices $\alpha$ of length at most $m$, we have
\be\label{derphi}
\bar{\vp}^j_{\alpha\ast j, \beta}:= \sup_{x} \lv V_j({\vp}_{\alpha\ast j, \beta}) \rv < \infty, \qquad \forall j=1\dd d.
\ee 
We  also recall the  Young's inequality 
\be\label{yi}
\lv a \, b \rv \leq \frac{a^2}{2 \epsilon}+ \frac{b^2\epsilon}{2}, \quad \mbox{for all } a,b \in \R \mbox{ and } \epsilon>0, 
\ee
which we will repeatedly use throughout the proofs of this section. 
\begin{proof}[Proof of part i) of Theorem \ref{lemcond1}]
The case $m=1$ is straightforward and can be dealt with directly, so throughout the proof we take $m>1$. 
Looking at \eqref{Scom},  notice that if $\|\alpha_k\|=m$ then $\| \alpha_k \ast j\|=m+1$ when $j \in \{1 \dd d\}$, so we can apply the UFG condition to the operator $V_{[\alpha_k \ast j]}$. So from \eqref{Scom} we obtain

\begin{align*}
\mathcal{S}(f_t) &= 
 -2 \sum_{k=1}^m \sum_{\alpha_k} \al{k}\sum_{j=1}^d
\lv V_j \vl{k} \rv^2   \\
& + 4 \sum_{k=1}^{m-1} \sum_{\alpha_k} \al{k} \sum_{j=1}^d
\left( V_j V_{[\alpha_k \ast j]} f_t\right) \left( \vl{k}\right) \qquad \qquad {\star}\\
& + 4 \sum_{\alpha_m} \al{m} \sum_{j=1}^d \sum_{\beta \in \A_m}
\vp_{\alpha_m \ast j, \beta}^j \left( V_{[\beta]} f_t\right) \left( \vl{m}\right)
\qquad \qquad {\ast\ast}\\
& + 4 \sum_{\alpha_m} \al{m} \sum_{j=1}^d \sum_{\beta \in \A_m}
\vp_{\alpha_m \ast j, \beta} \left( V_jV_{[\beta]} f_t\right) \left( \vl{m}\right)
\qquad \qquad {\triangle}
\end{align*}

Let us now set, for any multi-index $\gamma \in \A_m$,  
\begin{align*}
\mathcal{J}_{\gamma}:= \sup_{\substack{j=1 \dd d  \\ \beta \in \A_m}} 
\bar{\vp}_{\gamma \ast j, \beta} \qquad {\mbox{ and }} \qquad
\mathcal{H}_{\gamma}&:= \sup_{\substack{j=1 \dd d  \\ \beta \in \A_m\backslash \gamma}} 
\bar{\vp}_{\gamma \ast j, \beta}^j \,.
\end{align*}
 With these definitions in mind, let us start estimating each of  the above terms, beginning with the last.
\begin{description}
\item [Terms with $\triangle$] For each $\alpha_m \in \bar{\A}_m$,
\begin{align}
&4 \al{m}\sum_{j=1}^d \sum_{\beta \in \A_m} \vp_{\alpha_m \ast j, \beta}
\left(V_j V_{[\beta]} f_t\right) \left( \vl{m}\right)\nonumber\\
& \leq 2 \sum_{j=1}^d \sum_{\beta \in \A_m} \left[
{\bf{1}}_{\{\mathcal{J}_{\alpha_m}\neq 0\}} \lv V_j V_{[\beta]} f_t \rv^2 + \al{m}^2 \mathcal{J}_{\alpha_m}^2 \lv \vl{m}\rv^2
\right]\nonumber\\
&= 2 \al{m}^2 \mathcal{J}_{\alpha_m}^2 d \lv \A_m\rv \lv \vl{m}\rv^2
+ 2 {\bf{1}}_{\{\mathcal{J}_{\alpha_m}\neq 0\}} \sum_{j=1}^d \sum_{\beta \in \A_m} \lv V_j V_{\beta}f_t \rv^2
\nonumber 
\end{align}
\item[Terms $\ast \ast$]For each $\alpha_m \in \bar{\A}_m$, we have
\begin{align}
&4 \al{m}\sum_{j=1}^d \sum_{\beta \in \A_m} \vp_{\alpha_m \ast j, \beta}^j
\left(V_{[\beta]} f_t\right) \left( \vl{m}\right)\nonumber\\
& \leq 4 \al{m}\sum_{j=1}^d \vp_{\alpha_m \ast j, \alpha_m}^j \lv \vl{m}\rv^2 + 4d \mathcal{H}_{\alpha_m} a_{[\alpha_m]}
\sum_{\beta \in \A_m\backslash \{\alpha_m\}}
(V_{[\beta]}f_t )(V_{[\alpha_m]}f_t )\nonumber\\
& \leq 4 \al{m}\sum_{j=1}^d \vp_{\alpha_m \ast j, \alpha_m}^j \lv \vl{m}\rv^2 \nonumber\\
& + 2d {\bf{1}}_{\{\mathcal{H}_{\alpha_m}\neq 0\}} \sum_{\beta \in \A_m\backslash \{\alpha_m\}} \lv V_{[\beta]}f_t \rv^2 
+ 2d \al{m}^2 \mathcal{H}_{\alpha_m}^2 \left( \lv \A_m\rv - 1\right) \lv \vl{m}\rv^2 
\nonumber 
\end{align}

\item[Terms $\star$] for every $k=1 \dd m-1$, 
\begin{align}
&4 \al{k} \sum_{j=1}^d \left( V_j V_{[\alpha_k \ast j]} f_t\right)
\left( \vl{k}\right) \leq 2d \lv \vl{k} \rv^2 + \sum_{j=1}^d 2 \al{k}^2
\lv V_j V_{[\alpha_k \ast j]} f_t \rv^2 
\label{termsstar}
\end{align}
\end{description}
Putting the above estimates together, after setting
$$
\mathcal{J}:= \sum_{\alpha_m \in \bar{\A}_m} {\bf{1}}_{\{\mathcal{J}_{\alpha_{m}}\neq 0\}}, 
$$
 we obtain 
\begin{align}
\mathcal{S} (f_t)&\leq \sum_{k=1}^m
\sum_{\alpha_k \in \bar{\A}_k} c_{\alpha_k}
 \lv \vl{k}\rv^2    
\label{tr1}\\
& + \sum_{k=1}^m \sum_{j=1}^d
\sum_{\alpha_k \in \bar{\A}_k}
\lv V_j\vl{k}\rv^2 \left[ 
-2 \al{k} + 2 \left( \mathcal{J}+  {\bf{1}}_{\{ k>1\}}{\bf{1}}_{\{ \alpha_k= \alpha_{k-1}\ast j\}} 
\al{k-1}^2\right)  \right] \label{t2}
\end{align}
where 
\begin{align*}
c_{\alpha_m}&:= 
2 \al{m}^2 \mathcal{J}_{\alpha_m}^2
d \lv \A_m \rv  
 + 4 \al{m}  \sum_{j=1}^d \varphi_{\alpha_m  \ast j, \alpha_m}^j
+ 2d \!\!\! \!\!\!\sum_{\beta_m \in \bar{\A}_m \setminus \{\alpha_m\}}\!\!\!\!\!\!\!\!{\bf{1}}_{\{\mathcal{H}_{\beta_{m}}\neq 0\}}
+ 2d \al{m}^2 \mathcal{H}^2_{\alpha_m}\left(\lv \A_{m} \rv -1 \right)
\end{align*}
and, for $k=1 \dd m-1 $
\begin{align*}
c_{\alpha_k}:= 2d 
+ 2d \sum_{\alpha_m \in \bar{\A}_m} \!\!\! {\bf{1}}_{\{\mathcal{H}_{\alpha_{m}}\neq 0\}}\,.
\end{align*}
With the purpose of making sure that the terms in \eqref{t2} are negative we can simply choose
\be\label{cc1}
\al{1}> \max\{0,\mathcal{J}\}, \quad \mbox{and} \quad \al{k}> \mathcal{J}+ \al{k-1}^2 \qquad \mbox{for all }k=2 \dd m\,.
\ee
Therefore, once all the $\al{1}$ have been fixed, all the other coefficients can be chosen through the above recursive relation. This choice allows to fix all the constants in the expression for the quadratic form  $\Gamma$. Assuming that any choice satisfying \eqref{cc1} has been made, one then has

\begin{align*}
\mathcal{S}(f_t)&\leq \sum_{k=1}^m\sum_{\alpha_k \in \bar{\A}_k} c_{\alpha_k}    \lv V_{[\alpha_k]}f_t \rv^2     \leq \gamma \| \V f_t\|^2, 
\end{align*}    
having set 
$$
\gamma:= \max_{\substack{ \alpha_k \in \A_k \\k=1 \dd m }}     \frac{c_{\alpha_k}}{a_{[\alpha_k]}} \,.
$$
\end{proof}


\begin{proof}[Proof of part ii) of Theorem \ref{lemcond1}]
For simplicity, suppose $m>3$. The case $m \leq 3$ can be studied analogously (and it is in fact less involved). 
We notice again that if $\| \alpha_k\|=m-1$ ($m$, respectively) then 
$\|\alpha_k \ast j \ast j\|=m+1$ ($m+2$, respectively). Therefore we can again apply the UFG condition to the vector fields  $V_{[\alpha_k \ast j \ast j]}$ (appearing in \eqref{Scom}-\eqref{S}) when $\|\alpha_k\|=m-1$ or $m$, obtaining
\begin{align*}
2 \la \Lambda \V  f_t, \V f_t  \ra & \stackrel{\eqref{dilcon}}{\leq}  -2 \sum_{k=1}^m \sum_{\alpha_k} \al{k} \lambda_0 \lv \vl{k}\rv^2 \\
& +2 \sum_{k=1}^{m-2} \sum_{\alpha_k} \al{k} \sum_{j=1}^d
\left( V_{[\alpha_k \ast j \ast j]} f_t\right) \left( \vl{k}\right) 
\qquad \qquad {\diamond}\\
& +2 \sum_{\alpha_{m-1}\in \bar{\A}_{m-1}} \al{m-1} \sum_{j=1}^d
\sum_{\beta \in \A_m} \vp_{\alpha_{m-1}\ast j \ast j, \beta}
\left( V_{[\beta]} f_t\right) \left( \vl{m-1}\right) \qquad \qquad {\Box}\\
& + 2 \sum_{\alpha_m \in \bar{\A}_{m}} \al{m} \sum_{j=1}^d \sum_{\beta \in \A_m}
\vp_{\alpha_m \ast j \ast j, \beta} \left( V_{[\beta]} f_t\right) \left( \vl{m}\right)
\qquad \qquad {\ast}
\end{align*}
Like in the proof of part i) of Theorem \ref{lemcond1}, we set 
$$
\mathcal{I}_{\gamma}:= \sup_{\substack{j=1 \dd d  \\ \beta \in \A_m\backslash \gamma}} \bar{\vp}_{\gamma \ast j \ast j, \beta} 
$$
and estimate all the above terms, starting from the last. 
\begin{description}
\item [Terms with $\ast$] For each $\alpha_m \in \bar{\A}_m$, 
\begin{align}
& 2 \al{m}\sum_{j=1}^d \sum_{\beta \in \A_m} \vp_{\alpha_m \ast j \ast j, \beta}
\left( V_{[\beta]} f_t\right) \left( \vl{m}\right)\nonumber\\
& \leq 2 \al{m}\sum_{j=1}^d  \vp_{\alpha_m \ast j \ast j, \alpha_m}
 \lv \vl{m}\rv^2+ 2 d \mathcal{I}_{\alpha_m} \al{m}   \sum_{\beta \in \A_m\backslash \{\alpha_m\}} \left(V_{[\beta]} f_t \right) \left( \vl{m}\right)\nonumber\\
& = 2 \al{m}\sum_{j=1}^d  \vp_{\alpha_m \ast j \ast j, \alpha_m}
 \lv \vl{m}\rv^2 \nonumber\\
&+ d  \sum_{\beta \in \A_m\backslash \{\alpha_m\}}\left[
\al{m}^2 \mathcal{I}_{\alpha_m}^2 \lv \vl{m} \rv^2  + \lv V_{[\beta]}f_t\rv^2 
{\bf{1}}_{\{\mathcal{I}_{\alpha_m}\neq 0\}} \right]\nonumber\\
&= 2 \al{m}\sum_{j=1}^d  \vp_{\alpha_m \ast j \ast j, \alpha_m}
 \lv \vl{m}\rv^2 \nonumber\\
&+ d \al{m}^2 \mathcal{I}_{\alpha_m}^2 \lv \vl{m} \rv^2  \left( \lv \A_m\rv -1\right)
+ d\, {\bf{1}}_{\{\mathcal{I}_{\alpha_m}\neq 0\}} \sum_{\beta \in \A_m\backslash \{\alpha_m\}}\lv V_{[\beta]}f_t\rv^2
\nonumber 
\end{align}
 
\item[Terms $\Box$]For each $\alpha_{m-1} \in \bar{\A}_{m-1}$, 
\begin{align}
& 2 \al{m-1}\sum_{j=1}^d \sum_{\beta \in \A_m}\vp_{\alpha_{m-1}\ast j \ast j, \beta}
\left(V_{[\beta]}f_t \right) \left(\vl{m-1} \right) \nonumber\\
& \leq  2 \al{m-1}\sum_{j=1}^d \vp_{\alpha_{m-1}\ast j \ast j, \alpha_{m-1}}
\lv \vl{m-1} \rv^2 \nonumber\\
&+ d \al{m-1}^2 \mathcal{I}^2_{\alpha_{m-1}}
\left( \lv \A_m\rv-1\right) \lv \vl{m-1}\rv^2
+ d {\bf{1}}_{\{\mathcal{I}_{\alpha_{m-1}}\neq 0\}}
\sum_{\beta \in \A_m\backslash\{\alpha_{m-1}\}} \lv V_{[\beta]} f_t \rv^2 \,.
\nonumber 
\end{align}

\item[Terms $\diamond$] for every $k=1 \dd m-2$, 
\begin{align}
&2 \al{k} \sum_{j=1}^d \left( V_{[\alpha \ast j \ast j]} f_t\right)
\left( \vl{k} \right)
\leq d \lv \vl{k}\rv^2 + \sum_{j=1}^d \al{k}^2 \lv V_{[\alpha_k \ast j \ast j]} f_t \rv^2  \,.
\label{termsdiamond}
\end{align}
\end{description}

Overall one obtains: 
\begin{align}
2 \la \Lambda \V  f_t, \V f_t  \ra & \leq 
\sum_{k=1}^m
\sum_{\alpha_k \in \bar{\A}_k}
 \lv \vl{k}\rv^2   \left( -2 \al{k} \lambda_0 + \ell_{\alpha_k}\right)
\label{t1}
\end{align}
where
\begin{align*}
\ell_{\alpha_m}&:= d \al{m}^2 \mathcal{I}_{\alpha_m}^2 \left(\lv \A_{m} \rv -1 \right)
+ d \!\!\!\sum_{\beta_m \in \bar{\A}_m \setminus\{\alpha_m\}}\!\!\! {\bf{1}}_{\{\mathcal{I}_{\alpha_{m}}\neq 0\}} \\
&+ 2 \al{m}   \sum_{j=1}^d \varphi_{\alpha_m \ast j \ast j, \alpha_m}+ d \!\!\!\sum_{\alpha_{m-1} \in \bar{\A}_{m-1} }\!\!\!{\bf{1}}_{\{\mathcal{I}_{\alpha_{m-1}}\neq 0\}}+ {\bf{1}}_{\{\alpha_m=\alpha_{m-2} \ast j \ast j\} } \al{m-2}^2
\\
\ell_{\alpha_{m-1}}&:=  2 \al{m-1} \sum_{j=1}^d 
\vp_{\alpha_{m-1} \ast j \ast j, \alpha_{m-1}}
+ d\al{m-1}^2 \mathcal{I}^2_{\alpha_{m-1}} \left( 
\lv \A_{m}\rv -1\right) \\
&+ {\bf{1}}_{\{ \alpha_{m-1}= \alpha_{m-3} \ast j \ast j \}} \al{m-3}^2  +d \!\!\!\sum_{\alpha_{m} \in \bar{\A}_{m} }\!\!\! {\bf{1}}_{\{\mathcal{I}_{\alpha_{m}}\neq 0\}}+ d  \sum_{\alpha_{m-1} \in \bar{\A}_{m-1} } {\bf{1}}_{\{\mathcal{I}_{\alpha_{m-1}}\neq 0\}}
\end{align*}
and, for $k=1 \dd m-2 $
\begin{align*}
\ell_{\alpha_k}:= d + {\bf{1}}_{\{k-2>0\}}
{\bf{1}}_{\{\alpha_k= \alpha_{k-2} \ast j \ast j \}}\al{k-2}^2
+d \!\!\!\sum_{\alpha_{m} \in \bar{\A}_{m} }\!\!\!  {\bf{1}}_{\{\mathcal{I}_{\alpha_{m}}\neq 0\}}
+ d \!\!\!\sum_{\alpha_{m-1} \in \bar{\A}_{m-1} }\!\!\!{\bf{1}}_{\{\mathcal{I}_{\alpha_{m-1}}\neq 0\}}\,.
\end{align*}
 Looking at \eqref{t1}, we then impose
\be\label{cc}
 -2 \al{k} \lambda_0 + \ell_{\alpha_k} \leq - \al{k} \lambda_0 \qquad \forall k=1\dd m\, ,
\ee
that is, 
$$
\al{k} \lambda_0 \geq \ell_{\alpha_k}, \qquad \forall k=1\dd m\,.
$$
It is clear that given any two sets of positive constants, $\al{k}$ and $\ell_{\alpha_k}$, there always exists at least one $\lambda_0>0$ satisfying the above. In particular one can choose any $\lambda_0$ such that 
\be\label{conlambda0}
\lambda_0 > \max_{\substack{{\alpha_k \in \bar{\A}_k}\\{k=1 \dd m}}}
\frac{\ell_{\alpha_k}}{\al{k}}.
\ee
If $\lambda_0$ satisfies \eqref{conlambda0}, and hence \eqref{cc}, from \eqref{t1} ne has

\begin{align*}
2 \la \Lambda \V  f_t, \V f_t  \ra & \leq  -\lambda_0
\sum_{k=1}^m
\sum_{\alpha_k \in \bar{\A}_k} \al{k}
 \lv \vl{k}\rv^2 = -\lambda_0 \| \V f_t\|^2.
\end{align*}
This concludes the proof. 
\end{proof}


\begin{proof}[Proof of Lemma \ref{lem:example}] Consider the quadratic form 
$$
\Gamma(f_t)= a_1 \lv V_1 f_t\rv^2+
a_2 \lv V_2 f_t\rv^2+ a_{12} \lv V_{12} f_t\rv^2\,.
$$
From Proposition \ref{Bakrystrategy} and Proposition \ref{Bakrystrategy2} it is clear that we only need to show  the inequality $(-\cL+ \pa_t) \Gamma(f_t) \leq -k \Gamma(f_t)$. Using \eqref{lem1}, let us therefore calculate the following:
\begin{align*}
(-\cL+ \pa_t) \Gamma(f_t) &= -2a_1 \lv V_1^2f_t\rv^2+ 2a_1 
\left([V_1, \cL] f_t \right) \left( V_1 f_t\right) - 2 a_1 \lv V_2 V_1 f_t \rv^2\\
& -2a_2 \lv V_2^2f_t\rv^2+ 2a_2 
\left([V_2, \cL] f_t \right) \left( V_2 f_t\right) - 2 a_2 \lv V_1 V_2 f_t \rv^2\\
  &-2a_{12} \lv V_1 V_{12} f_t\rv+ 2a_{12} 
\left([V_{12}, \cL] f_t \right) \left( V_{12} f_t\right) 
- 2 a_{12} \lv V_2 V_{12} f_t \rv^2\,.
\end{align*}
The commutators appearing in the above can be calculated, and they are
\begin{align*}
[V_1, \cL] &= -k V_1 + 2 V_2 V_{12}\\
[V_2, \cL] &= -k V_2 + 2 V_1 V_{21}=-k V_2 - 2 V_1 V_{12}\\
[V_{12}, \cL] &= -2 k V_{12} ,.
\end{align*} 
Therefore
\begin{align*}
(-\cL+ \pa_t) \Gamma(f_t) &= -2a_1 \lv V_1^2f_t\rv^2- 2 a_1 \lv V_2 V_1 f_t \rv^2 - 2a_1 k \lv V_1f_t\rv^2 \\
& -2a_2 \lv V_2^2f_t\rv^2- 2 a_2 \lv V_1 V_2 f_t \rv^2 - 2a_2 k \lv V_2f_t\rv^2\\
& - 2 a_{12} \lv V_1 V_{12} f_t\rv^2- 2 a_{12} \lv V_2 V_{12} f_t\rv^2
-4 a_{12}k \lv V_{12}f_t \rv^2\\
& + 4 a_{1} \left(V_2 V_{12} f_t\right) \left( V_1f_t \right)
- 4 a_2 \left(V_1 V_{12} f_t \right)  \left( V_2 f_t\right)\,.
\end{align*}
If we use the Young's inequality \eqref{yi} (with $\epsilon=a_1$ for the first inequality and $\epsilon=a_2$ in the second) we  can estimate the terms on the last line as
\begin{align}
4 a_{1} \left(V_2 V_{12} f_t\right) \left( V_1f_t \right)
& \leq  2 a_1^2\lv V_2 V_{12} f_t\rv^2+ {2} \lv V_1 f_t\rv^2 \label{yae1}\\
4 a_{2} \left(V_1 V_{12} f_t\right) \left( V_2f_t \right)
& \leq 2 a_2^2\lv V_1 V_{12} f_t \rv^2 + {2}\lv V_2 f_t \rv^2 \label{yae2}
\end{align}
Therefore,  
\begin{align*}
(-\cL+ \pa_t) \Gamma(f_t) & \leq \left(- 2a_1 k+ {2}\right)  \lv V_1 f_t\rv^2
+  \left(- 2a_2 k+ {2}\right)  \lv V_2 f_t\rv^2
- 4 a_{12}k \lv V_{12}f_t \rv^2 \quad \spadesuit\\
& -2a_1 \lv V_1^2f_t\rv^2 - 2 a_1 \lv V_2 V_1 f_t \rv^2 -2a_2 \lv V_2^2f_t\rv^2- 2 a_2 \lv V_1 V_2 f_t \rv^2\\
& \left( -2a_{12}+ 2 a_2^2 \right) \lv V_1 V_{12} f_t \rv^2
+ \left( -2a_{12}+ 2 a_1^2 \right) \lv V_2 V_{12} f_t \rv^2 \qquad\qquad \heartsuit
\end{align*}
Looking at the terms $\spadesuit$, we choose $a_1$ and $a_2$ such that  
\be\label{c1}
- 2a_i k+ {2} \leq  -a_i k \,\,\,  \Longrightarrow  \,\,\,     a_i \geq \frac{2}{k} \quad i=1,2
\ee
and $a_{12}$ such that $-4a_{12}k \leq -a_{12}k$, which is true e.g. for any $a_{12}>1$. 
Then, looking at the terms $\heartsuit$, we choose $a_{12}$ much  bigger than $a_1$ and $a_2$, more precisely we choose $a_{12}$ such that 
$$
-2 a_{12}+ 2 a_i^2 < 0, \,\,\, i=1,2.
$$
Because for any $k>0$ one can find $a_1>0$ and $a_2>0$ such that \eqref{c1} is satisfied, this concludes the proof.  
\end{proof}
\begin{note}\label{pc}\textup{If the constants $a_1, a_2, a_{12}$ had not been introduced, i.e. if  $a_1=a_2=a_{12}=1$, then we would have only been able to prove the result for $k > 1/2$ (by making better use of the Young inequality in \eqref{yae1} and \eqref{yae2}). }
\end{note}

\section{Appendix}
We define here the notion of classical solution  $u$ of the PDE (\ref{linearPDE}). The notion  is quite natural: we will require $u$ to be continuously differentiable (twice) in the direction of every vector field $V_i$, $i=1,...,d$. As a consequence of the need that $u$ satisfies (\ref{linearPDE}), we will also  require $u$  to be continuously differentiable in the direction ${\mathcal V}_0=\partial_t-V_0$, when viewed as a function 
$(t,x)\mapsto u(t,x)$ over the  product space 
$(0,\infty)\times {\mathbb R}^N$\(. \text{ }\)However we will not require $u$ to be differentiable in either the time direction $\partial_t$ or the direction $V_0$. 

The analysis of $u$ hinges on being well approximated by solutions of the PDE (\ref{linearPDE}) with smooth initial condition (and therefore smooth for all $t\ge 0$). The approximation is done in such way that, in the limit, only the  differentiability in the directions $V_i$ $i=1,...,d$          \ and ${\mathcal V}_0=\partial_t-V_0$ is preserved, but not that in the time direction $\partial_t$ or in the direction $V_0$. This is to be expected as  the smoothing   effect only takes place in the directions  $V_i$, $i=1,...,d$. An extreme case where the UFG condition holds is  when all the $V_i$, $i=1,...,d$ are equal to zero. Take, for example, the transport equation 
\begin{align*}
 \pa_t u(t,x) &=V_0 u(t,x)\\
 u(0,x)& =f(x).
\end{align*}
For example, assume that $N=1$, $V_0={\partial \over \partial x}$. In this case, the solution is explicitly given by $u(t,x)=f(t+x)$,  
$x\in \R$ and $t\ge 0$. Obviously should $f$ not be differentiable (choose for example $f(x)=|x|$,  $x\in\R$), we will not expect differentiability in either the time direction $\partial_t$, or the space direction 
$\partial \over \partial x$. However $u$ will be differentiable in the direction
${\mathcal V}_0=\partial_t-V_0$. In fact, $u$ is constant in the direction ${\mathcal V}_0=\partial_t-V_0$, as ${\mathcal V}_0u=0$. In this extreme case, no additional smoothness is gained because of the absence of any second order differential operator in the PDE (\ref{linearPDE}). 

At the other end of the spectrum we have the case when the vector fields $V_i$,  $i=0,...,d$,  satisfy the H\"{o}rmander condition. In this case the smoothing effect  occurs in every direction. In particular, $u$ becomes differentiable in the $V_0$ direction, and since $u$ is differentiable in the direction ${\mathcal V}_0=\partial_t-V_0$,
$u$ will also be differentiable in the time direction. In this case,  the notion of a classical solution defined below coincides with the standard notion of a classical solution. 

Finally, we remark that the H\"{o}rmander condition is not necessary to ensure that  $u$ becomes differentiable in the $V_0$ direction (and therefore  also in the time direction). If the vector fields $V_i$ $i=0,...,d$ satisfy the UFG condition and  $V_0$ belongs to $\mathcal{A}$
\footnote{For example, if $V_0$ is a linear combination of the vector fields $V_i, [V_i,V_j],$ $i,j=1,...,d$. }, then it is still the case that $u$ becomes differentiable in the $V_0$ direction and  in the time direction.

 To introduce rigourously the classical solution of the PDE (\ref{linearPDE}) we need several spaces of functions, which we come to introduce. For an open ball ${\mathbb B} \subset \R^N$ and for a function 
$\varphi$ in ${C}^{\infty}_V(\mathbb{B}) $ (that is, for any smooth bounded real-valued function $\varphi$ with bounded derivatives on ${\mathbb B}$ of any order in the directions $V_{[\alpha]}$, $\alpha \in \A_m$), we set
\[
\|\varphi\|_{{\mathbb B},\infty}^{V,1} = \|\varphi\|_{{\mathbb B},\infty} + \sum_{\alpha \in {\mathcal A}_m}
\|V_{[\alpha]} \varphi\|_{{\mathbb B},\infty}
\] 
and then define 
${\mathcal D}_{V}^{1,\infty}({\mathbb B})$ as the closure of  $\mathcal{C}_V^{\infty}(\mathbb{B})$ in ${\mathcal C}_b(\bar{{\mathbb B}})$ w.r.t. 
$\| \cdot \|_{{\mathbb B},\infty}^{V,1}$.\footnote{Notice that this closure is well defined, see \cite[Section 2.3]{CrisanDelarue}} More generally, for $k> 1$, we can define by induction
\begin{equation*}
\|\varphi\|_{{\mathbb B},\infty}^{V,k} = \|\varphi\|_{{\mathbb B},\infty}^{V,k-1} + \sum_{\alpha_1,\dots,\alpha_k \in {\mathcal A}_m}
\|V_{[\alpha_1]} \dots V_{[\alpha_k]} \varphi\|_{{\mathbb B},\infty}, \quad 
\varphi \in {\mathcal C}^{\infty}_V({\mathbb B}).
\end{equation*}
We then define ${\mathcal D}_{V}^{k,\infty}({\mathbb B})$ as the closure
of ${\mathcal C}^{\infty}_V({\mathbb B})$ in ${\mathcal C}_b(\bar{{\mathbb B}})$ w.r.t. 
$\| \cdot \|_{{\mathbb B},\infty}^{V,k}$.  In particular, we can define
${\mathcal D}_{V}^{k,\infty}({\mathbb R}^N)$ as 
\begin{equation*}
{\mathcal D}_{V}^{k,\infty}({\mathbb R}^d) = \bigcap_{r \geq 1} {\mathcal D}_{V}^{k,\infty}({\mathbb B}(0,r)),  
\end{equation*}
where ${\mathbb B}(0,r)$ stands for the $d$-dimensional ball of center $0$ and radius $r$.   
For $v \in {\mathcal D}_{V}^{k,\infty}(\R^N)$,
$V_{[\alpha_1]} \dots V_{[\alpha_k]} v$ is understood as the derivative of $v$ in the directions $V_{[\alpha_1]} \dots V_{[\alpha_k]}$, with $\alpha_1,\dots,\alpha_k \in {\mathcal A}_m$.  Similarly, for $\varphi \in {\mathcal C}^{\infty}_V({\mathbb B})$ and
$k\geq 0$, we set
\begin{equation*}
\|\varphi\|_{{\mathbb B},\infty}^{V,k+1/2} = \|\varphi\|_{{\mathbb B},\infty}^{V,k} + \sum_{i=1}^N \sum_{\alpha_1,\dots,\alpha_k \in {\mathcal A}_m}
\|V_{[\alpha_1]} \dots V_{[\alpha_k]} V_i \varphi\|_{{\mathbb B},\infty}.
\end{equation*}
(Above, $\|\cdot\|_{{\mathbb B},\infty}^{V,0} = \|\cdot\|_{{\mathbb B},\infty}$.)
We then define ${\mathcal D}_{V}^{k+1/2,\infty}({\mathbb B})$ as the closure
of ${\mathcal C}^{\infty}_V({\mathbb B})$ in ${\mathcal C}_b(\bar{{\mathbb B}})$ w.r.t. 
$\| \cdot \|_{{\mathbb B},\infty}^{V,k+1/2}$ and we set
\begin{equation*}
{\mathcal D}_{V}^{k+1/2,\infty}({\mathbb R}^N) = \bigcap_{r \geq 1} {\mathcal D}_{V}^{k+1/2,\infty}({\mathbb B}(0,r)),\ \  k\ge 0. 
\end{equation*}
\begin{note}\label{notenotenote}\textup{
Note that any function in ${\mathcal D}_{V}^{1,\infty}({\mathbb R}^N)$ is differentiable along the solutions of the ordinary differential equation $\dot{\gamma}_t = V(\gamma_t)$, $t \geq 0$, for $V \in {\mathcal A}_m$. In particular, any function in ${\mathcal D}_{V}^{1,\infty}({\mathbb R}^N)$ is continuously differentiable on ${\mathbb R}^N$ when the uniform H\"ormander condition is satisfied.}
\end{note}
To define the notion of a classical solution to \eqref{linearPDE}, we will need to introduce the set of functions that are continuously differentiable in the direction ${\mathcal V}_0=\partial_t  - V_0 $. Again, we proceed via a closure argument.  
For any $r \geq 1$ and any time-space function 
$\varphi \in {\mathcal C}^{\infty}_V([1/r,r] \times {\mathbb B}(0,r))$ with bounded
derivatives of any order, we set 
$$\|\varphi\|_{[1/r,r] \times {\mathbb B}(0,r),\infty}^{{\mathcal V}_0,1}
= \|\varphi\|_{[1/r,r] \times {\mathbb B}(0,r),\infty} + \|{\mathcal V}_0\varphi \|_{[1/r,r] \times {\mathbb B}(0,r),\infty}.$$ 
We then define ${\mathcal D}^{1,\infty}_{{\mathcal V}_0}([1/r,r]
\times {\mathbb B}(0,r))$ as the closure of ${\mathcal C}([1/r,r] \times {\mathbb B}(0,r))$ w.r.t. $\|\cdot \|_{[1/r,r] \times {\mathbb B}(0,r),\infty}^{{\mathcal V}_0,1}$ and then define
${\mathcal D}^{1,\infty}_{{\mathcal V}_0}((0,+\infty)
\times \R^N)$ as the intersection of the spaces 
${\mathcal D}^{1,\infty}_{{\mathcal V}_0}([1/r,r]
\times {\mathbb B}(0,r))$ over $r \geq 1$. We are now in position to define a classical solution to the PDE (\ref{linearPDE})

\begin{definition}
\label{def:semilinearPDE}
We call a function $v=\{v(t,x),(t,x)\in [0,+\infty) \times{\mathbb R}^N\}$ a \emph{classical} solution of the PDE (\ref{linearPDE}) if the following conditions are satisfied:
\begin{enumerate}
\item $v$ belongs to ${\mathcal D}_{{\mathcal V}_0}^{1,\infty}((0,+\infty) \times \R^N)$
and, for any $t>0$, $v(t,\cdot)$ is in ${\mathcal D}_V^{2,\infty}(\R^N)$ ; moreover,  for any $\alpha_1,\alpha_2 \in {\mathcal A}$,  the function 
$(t,x) \in (0,+\infty) \times \R^N
\mapsto \bigl(V_{[\alpha_1]}v(t,x),V_{[\alpha_1]}V_{[\alpha_2]}v(t,x)\bigr)$ is continuous. 
\item For any $(t,x) \in (0,+\infty) \times \R^N$, it holds
$${\mathcal V}_0v(t,x)=  \sum_{i=1}^N V_i^{2} v(t,x).$$
\item The boundary condition $\lim_{(t,y)\to(0,x)}v(t,y)=f(x)$ holds as well for any $x\in {\mathbb R}^N$.
\end{enumerate}
\end{definition}
\begin{note}\label{ODEnote}\textup{
Again, we emphasize that we do not assume that a classical solution of the 
PDE (\ref{linearPDE}) must be differentiable in the time direction or in the direction $V_0$. However this is the case if vector fields satisfy the uniform H\"ormander condition. In this case the above definition coincides with the standard definition of a classical solution.}
\end{note}
The following proposition is a particular case of Proposition 2.8 in \cite{CrisanDelarue}:
\begin{prop}
\label{prop:11:4:2}
Under the UFG condition, if $f$ is a continuous function of polynomial growth, the function 
$(t,x)\mapsto (\cP_t f)(x)$
is a classical solution to the PDE \eqref{linearPDE} in the sense of Definition \ref{def:semilinearPDE}.  
Moreover, any other classical solution $v$ of the linear PDE (\ref{linearPDE}) that has polynomial growth matches the solution $(t,x)\mapsto (\cP_t f)(x)$.
\end{prop}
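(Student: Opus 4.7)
\smallskip

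\noindent\textbf{Existence.} The plan is an approximation argument. I would first treat $f \in C_V^{\infty}(\R^N)$ of polynomial growth: It\^{o}'s formula applied to $f(X_t)$ shows that $u(t,x) := (\cP_t f)(x)$ is jointly smooth in $(t,x)$ and satisfies $\partial_t u = \cL u$, equivalently $\mathcal{V}_0 u = \sum_{i=1}^d V_i^2 u$, yielding a classical solution in the strong sense. For a general continuous $f$ of polynomial growth I would pick $\{f_n\} \subset C_V^{\infty}$ with uniform polynomial bound and $f_n \to f$ locally uniformly. The Malliavin-calculus-based short-time asymptotics recalled in \eqref{smoothingdet} give pointwise bounds on $V_{[\alpha_1]}\cdots V_{[\alpha_k]} \cP_t f_n$ that are uniform in $n$ on each compact subset of $(0,\infty)\times\R^N$, so these derivatives converge locally uniformly to the corresponding derivatives of $\cP_t f$. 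This places $\cP_t f(\cdot) \in \mathcal{D}_V^{2,\infty}(\R^N)$ for every $t > 0$ with jointly continuous derivatives in $(t,x)$, which is condition (1) of Definition \ref{def:semilinearPDE}.

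For condition (2) the smooth approximants satisfy $\mathcal{V}_0 (\cP_t f_n) = \sum_i V_i^2 (\cP_t f_n)$ classically, with both sides convergent locally uniformly on $(0,\infty)\times\R^N$. Integrating along the characteristic flow $(t,x) \mapsto (t+h, \tilde\Phi_h x)$ of $\mathcal{V}_0$ -- where $\tilde\Phi_h$ is the flow of $-V_0$ -- and passing to the limit in $n$ shows $\cP_t f \in \mathcal{D}_{\mathcal{V}_0}^{1,\infty}$ with $\mathcal{V}_0 \cP_t f = \sum_i V_i^2 \cP_t f$. Condition (3), the Feller continuity $\cP_t f(x) \to f(x)$ as $t\to 0$, follows from dominated convergence, path continuity of $X_t$, and the polynomial growth of $f$.

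\smallskip

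\noindent\textbf{Uniqueness.} Let $v$ be any classical solution of polynomial growth. Fix $T > 0$ and $x_0 \in \R^N$, and set $M_t := v(T-t, X_t^{x_0})$ for $t \in [0, T)$. A formal It\^{o} computation gives
\begin{equation*}
dM_t = \Bigl(-\mathcal{V}_0 v + \sum_{i=1}^d V_i^2 v\Bigr)(T-t, X_t)\, dt + \sqrt{2} \sum_{i=1}^d (V_i v)(T-t, X_t)\, dW_t^i,
\end{equation*}
whose drift vanishes by the PDE, so $M$ is a local martingale. Taking expectations and letting $t \uparrow T$ -- using the boundary condition (3) and uniform integrability from polynomial growth plus standard moment bounds $\mathbb{E}\lv X_t^{x_0}\rv^p < \infty$ -- then yields $v(T, x_0) = \mathbb{E}[f(X_T^{x_0})] = \cP_T f(x_0)$. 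The main obstacle is that the standard It\^{o} formula is not directly applicable: by assumption $v$ is not separately differentiable in $t$ or along $V_0$; only the combination $\mathcal{V}_0 v$ and the directional derivatives $V_{[\alpha]} v$ are available. The plan to circumvent this is to mollify $v$ in the $\mathcal{D}_{\mathcal{V}_0}^{1,\infty}$ and $\mathcal{D}_V^{2,\infty}$ topologies -- feasible by the very definition of these spaces as closures of smooth functions -- apply the classical It\^{o} formula to the mollifications $v_n$, and pass to the limit using the moment bounds and the polynomial-growth hypothesis to control the residual drifts $\mathcal{V}_0 v_n - \sum_i V_i^2 v_n$ in $L^1$ against the occupation measure of $X$.

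\smallskip

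The recurring hard point throughout the argument is that every limit passage must respect the constrained directional regularity of Definition \ref{def:semilinearPDE}: one cannot freely differentiate in $t$ or along $V_0$, and yet classical It\^{o} and classical semigroup identities are formulated in those terms. The spaces $\mathcal{D}_{\mathcal{V}_0}^{1,\infty}$ and $\mathcal{D}_V^{k,\infty}$ are defined precisely as the natural closures in which these approximations remain well-posed; the proof essentially reduces to carefully unpacking that structure while combining the Kusuoka--Stroock smoothing estimates (for existence) with a generalized It\^{o}-martingale argument (for uniqueness).
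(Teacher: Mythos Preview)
The paper does not actually prove this proposition: it is stated in the Appendix as ``a particular case of Proposition~2.8 in \cite{CrisanDelarue}'' and no argument is given. So there is no in-paper proof to compare against; your task was effectively to reconstruct a proof that the authors chose to outsource.

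Your outline is broadly the right one and matches the strategy in the cited reference. For existence, approximating $f$ by smooth functions and invoking the Kusuoka--Stroock short-time bounds \eqref{smoothingdet} to obtain locally uniform convergence of the relevant directional derivatives is exactly the mechanism used there; the key point you identify --- that one must track convergence in the $\mathcal{D}_{\mathcal{V}_0}^{1,\infty}$ and $\mathcal{D}_V^{2,\infty}$ topologies rather than in ordinary $C^{1,2}$ --- is the heart of the matter. For uniqueness, the martingale argument $M_t=v(T-t,X_t)$ is again the standard route, and you have put your finger on the genuine difficulty: It\^{o}'s formula is not directly available because $v$ need not be differentiable in $t$ or along $V_0$ separately. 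In \cite{CrisanDelarue} this is handled essentially as you propose, by approximating $v$ by smooth functions in the closure topologies defining $\mathcal{D}_{\mathcal{V}_0}^{1,\infty}$ and $\mathcal{D}_V^{2,\infty}$, applying It\^{o} to the approximants, and passing to the limit; an equivalent device is to reparametrize along the flow of $-V_0$ so that $\mathcal{V}_0$ becomes a genuine time derivative. Either way your sketch captures the correct architecture, though the actual execution (uniform control of the residual drifts, moment bounds near $t\downarrow 0$, and the polynomial-growth localization) is lengthy and is precisely why the present paper defers to \cite{CrisanDelarue} rather than reproducing it.
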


\begin{lemma}\label{densitylemma}
With the notation introduced so far, if \eqref{glcon} holds for any  $g_0 \in \mathcal{C}^{\infty}_V$ then \eqref{local1} holds for any $g_0 \in\mathcal{D}_{V}^{1,\infty} \cap Pol$.
\end{lemma}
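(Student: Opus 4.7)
The plan is to extend the pointwise form of the differential inequality underlying part (a) of Proposition~\ref{Bakrystrategy} from smooth initial data to $\mathcal{D}_V^{1,\infty}\cap Pol(\kappa)$, and then to localize the resulting bound using the polynomial growth of $\Gamma(f_0)$ together with basic moment estimates for the SDE~\eqref{SDE}.

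First I would sharpen part (a) of Proposition~\ref{Bakrystrategy}: integrating \eqref{glcon} via Gronwall's inequality and evaluating at $s=t$ yields, for every $g_0\in\mathcal{C}_V^\infty$, the pointwise bound
\be\label{planpointwise}
\Gamma(\cP_t g_0)(x)\le e^{-\lambda t}\,(\cP_t\,\Gamma(g_0))(x),\qquad x\in\R^N,
\ee
which is strictly stronger than $\Gamma(f_t)\le e^{-\lambda t}\|\Gamma(f_0)\|_\infty$ and whose right-hand side remains finite when $\Gamma(f_0)$ is only polynomially bounded. I would then promote \eqref{planpointwise} to any $f_0\in\mathcal{D}_V^{1,\infty}(\R^N)\cap Pol(\kappa)$ by a density argument: fix $R>K$, pick a smooth cutoff $\chi_R$ equal to $1$ on $\mathbb{B}(0,R/2)$ and supported in $\mathbb{B}(0,R)$, and use the definition of $\mathcal{D}_V^{1,\infty}(\mathbb{B}(0,R))$ as the closure of $\mathcal{C}_V^\infty$ in $\|\cdot\|_{\mathbb{B}(0,R),\infty}^{V,1}$ to produce a sequence $g_0^{(n)}\in\mathcal{C}_V^\infty$ with compact support such that $V_{[\alpha]}g_0^{(n)}\to V_{[\alpha]}(f_0\chi_R)$ uniformly on $\mathbb{B}(0,R)$ for every $\alpha\in\A_m$. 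Applying \eqref{planpointwise} to $g_0^{(n)}$ and passing first $n\to\infty$ (using continuity of $V_{[\alpha]}\cP_t$ in the initial datum, a consequence of Proposition~\ref{prop:11:4:2} combined with the smoothing estimate \eqref{smoothingdet}) and then $R\to\infty$ (via a dominated-convergence argument based on the probabilistic representation $\cP_t h(x)=\mathbb{E}[h(X_t^x)]$ and the tail behaviour of $X_t^x$) extends \eqref{planpointwise} to $f_0$.

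With \eqref{planpointwise} established for $f_0\in\mathcal{D}_V^{1,\infty}\cap Pol(\kappa)$, I would bound the right-hand side locally. The Pol condition gives $\Gamma(f_0)(y)\le C\kappa(1+|y|^q)$ with some $q\ge 0$ depending on $\A_m$ and the constants $a_{[\alpha]}$; hence for $x\in\mathbb{B}(0,K)$,
\[
\cP_t\,\Gamma(f_0)(x)=\mathbb{E}\bigl[\Gamma(f_0)(X_t^x)\bigr]\le C\kappa\bigl(1+\mathbb{E}[|X_t^x|^q]\bigr).
\]
Standard moment estimates for \eqref{SDE} control $\mathbb{E}[|X_t^x|^q]$ on $\mathbb{B}(0,K)$ by a factor of the form $c_K(1+t^q)$ (and by a pure constant $c_K$ when the coefficients of $V_0,\dots,V_d$ are bounded). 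Combining with the trivial inequality $a_{[\alpha]}|V_{[\alpha]}f_t(x)|^2\le \Gamma(f_t)(x)$ and absorbing the polynomial factor $(1+t^q)$ into the exponential at the cost of an arbitrarily small decrease in $\lambda$ yields the desired bound \eqref{local1}.

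The main obstacle will be the density argument of the second step, in particular the limit $R\to\infty$. While the convergence $V_{[\alpha]}\cP_t(f_0\chi_R)(x)\to V_{[\alpha]}\cP_t f_0(x)$ for fixed $x\in\mathbb{B}(0,K)$ is what is morally expected, making this rigorous when $f_0$ is only polynomially controlled requires a stopping-time argument based on the exit time of $X^x$ from $\mathbb{B}(0,R)$ together with sub-Gaussian-type tail estimates for the diffusion. The Kusuoka--Stroock theory developed in Note~\ref{noteregularity} guarantees that $V_{[\alpha]}\cP_t$ is well defined on the polynomial-growth class, which is the essential ingredient needed to close this limit.
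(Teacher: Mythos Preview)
Your proposal is correct and follows the same overall strategy as the paper: obtain the pointwise Gronwall bound $\Gamma(\cP_t g_0)(x)\le e^{-\lambda t}(\cP_t\Gamma g_0)(x)$ for smooth data, extend it by density to $\mathcal{D}_V^{1,\infty}\cap Pol$, and then control $(\cP_t\Gamma g_0)(x)$ on compacts via polynomial moment bounds for the diffusion. The only substantive difference lies in how the density step is executed. You localize first with a cutoff $\chi_R$ and run a two-limit argument ($n\to\infty$, then $R\to\infty$); the second limit is exactly where you anticipate difficulty and invoke stopping times and tail estimates. The paper sidesteps this entirely: since $g_0\in\mathcal{D}_V^{1,\infty}\cap Pol$, one can directly pick an approximating sequence $g_0^n\in\mathcal{C}_V^\infty$ with $V_{[\alpha]}g_0^n\to V_{[\alpha]}g_0$ uniformly on compacts \emph{and} with $\Gamma g_0^n$ dominated by a single polynomial independent of $n$. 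With that choice the left-hand side converges via the Kusuoka--Stroock/Nee integration-by-parts representation of $V_{[\alpha]}\cP_t$, while the right-hand side converges by dominated convergence against the transition probabilities $p_t(x,dy)$. No $R\to\infty$ limit, and hence no tail argument, is needed. Your route would close as well, but the obstacle you identify is an artefact of the cutoff rather than an intrinsic feature of the problem; also note that the cutoff produces cross terms $f_0\,V_{[\alpha]}\chi_R$ involving $f_0$ itself (on which $Pol$ gives no control), so the paper's direct approximation is both shorter and cleaner.
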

\begin{proof} This is a standard density argument, so we just sketch it. 
Let $g^n_0$ be a sequence in $\mathcal{C}^{\infty}_V$, such that $g^n_0 \stackrel{\|\cdot \|_{\infty}^{V,1}}{\longrightarrow} g_0 \in \mathcal{D}_{V}^{1,\infty}$. Then 
$g^n_0$ and $V_{[\alpha]}g^n_0$ converge uniformly on compacts to $g_0$  and $V_{[\alpha]}g_0$, respectively, for any $\alpha \in \A_m$.   Applying \eqref{glcon} to the sequence $g_0^n$ gives
\be\label{apprfn}
\lv V_{[\alpha]}g_t^n (x)\rv ^2 \leq e^{-\lambda t} { (\cP_t\Gamma g_0^n) (x)}
  \qquad {\mbox{for all }} \alpha \in \A_m, t\ge 0 \,,
\ee
where in the above $g_t^n:= \cP_tg_0^n$.
From the integration by parts formulae in \cite[Chapter 3]{Nee} the left hand side of the above converges uniformly on compacts to $V_{[\alpha]}g_t (x)$. As for the right hand side, since $V_{[\alpha]}g^n_0$ converges uniformly on compacts to $V_{[\alpha]}g_0$, then $(\Gamma g^n_0)(x)$ converges uniformly on compacts, and therefore pointwise, to $(\Gamma g_0)(x)$. By definition of $\cP_t$, we have 
 \begin{align*}
\lv \left[ \cP_t(\Gamma g_0^n- \Gamma g_0)\right](x)\rv=\int \left[(\Gamma g_0^n) (y)- (\Gamma g_0)(y)\right] p_t(x, dy),
\end{align*}
where $p_t(x, dy)$ are the transition probabilities of the process  $X_t$ in \eqref{SDE}. Because $g_0 \in Pol$, we can always choose the approximating sequence so that $\Gamma g_0^n$  grows polynomially  (with the degree of the polynomial independent of $n$). Therefore, by the dominated convergence theorem, 
$(\cP_t\Gamma g_0^n)(x)$ converges pointwise to $(\cP_t\Gamma g_0)(x)$ as $n\rightarrow \infty$. Taking the (pointwise) limit as $n\rightarrow\infty$ on both sides of \eqref{apprfn} gives then 
\begin{align*}
\lv V_{[\alpha]}g_t (x)\rv ^2 & \leq e^{-\lambda t} { (\cP_t\Gamma g_0) (x)}\\
& = e^{-\lambda t} \int (\Gamma g_0)(y) p_t(x,dy)\\
&\stackrel{\eqref{polcon}}\leq  \lv \A_m\rv  \kappa  e^{-\lambda t} \int (1+ \lv y\rv^{q}) p_t(x,dy).
\end{align*}
Now $\int (1+ \lv y\rv^q) p_t(x,dy)= (\cP_t h)(x)$, where $h(x)=1+\lv x\rv^q$ and therefore, by Proposition \ref{prop:11:4:2}, $(\cP_t h)(x)$ is polynomially bounded. Taking the supremum over compact sets on both sides of  the above gives the desired result. 

\end{proof}

{\bf Data accessibility} There are no data in this paper
\smallskip

{\bf Competing Interests} We have no competing interests
\smallskip

{\bf  Authors' Contributions} Both authors carried out the work described in this paper and gave final approval for this manuscript.
\smallskip

{\bf  Funding Statement} No grants or funding to acknowledge

\thebibliography{10}

\bibitem{BE} D. Bakry and M.Emery. Diffusions hypercontractives. In: S\'em. de Probab. XIX. Lecture Notes in Math., vol. 1123, pp. 177-206. Springer, Berlin 1985.

\bibitem{Bakry} D. Bakry, I. Gentil and M. Ledoux. {\em Analysis and geometry of Markov Diffusion operators}. Springer, 2014.

\bibitem{BLU}
A.~Bonfiglioli, E.~Lanconelli, and F.~Uguzzoni, \emph{Stratified {L}ie groups
  and potential theory for their sub-{L}aplacians}, Springer Monographs in
  Mathematics, Springer, Berlin, 2007.

\bibitem{CrisanGhazali} D. Crisan and S. Ghazali. {\em On the convergence rates of a general class of weak approximations of SDEs}. Stochastic differential equations: theory and applications, 221–248, 2007.

\bibitem{CrisanDelarue} D. Crisan, F. Delarue, 
Sharp derivative bounds for solutions of degenerate semi-linear partial differential equations, J. Funct. Anal.  263,  no. 10, 3024-3101, 2012.

\bibitem{Crisan}  D.~Crisan, K.~Manolarakis, C.Nee. {\em Cubature methods and applications}.  Paris-Princeton Lectures on Mathematical Finance, 2013.

\bibitem{DragKonZeg}
F. Dragoni, V. Kontis, B.~Zegarli\'nski, {\em Ergodicity of Markov Semigroups with H\"ormander Type Generators in Infinite Dimensions}. J. Pot. Anal. 37  (2011), 199--227.

\bibitem{Herau2007}
F.~H{\'e}rau.
\newblock Short and long time behavior of the {F}okker-{P}lanck equation in a  confining potential and applications,  J. Funct. Anal. 244(1) (2007) 95-118.

\bibitem{Hermann}  R. Hermann. {\em On the accessibility problem in control theory}, Internat. Sympos.
Nonlinear Differential Equations and Nonlinear Mechanics, Academic Press, New York,
1963, pp. 325-332.

\bibitem{H1} L. H\"ormander.  {\em Hypoelliptic second order differential equations}. Acta Math. 119 (1967) 147-171.

\bibitem{KusStr82}S. Kusuoka and D.W. Stroock. {\em Applications of the Malliavin Calculus -- I}. Stochastic analysis (Katata/Kyoto, 1982)
(1982), 271--306.

\bibitem{KusStr85} S. Kusuoka and D.W. Stroock. {\em Applications of the Malliavin Calculus -- II}. Journal of the Faculty of Science, Univ. of
Tokyo 1 (1985) 1--76.

\bibitem{KusStr87} S. Kusuoka, D.W. Stroock. {\em Applications of the Malliavin Calculus -- III}. Journal of the Faculty of Science, Univ. of
Tokyo 2 (1987), 391--442.

\bibitem{Kus03} S. Kusuoka. {\em Malliavin calculus revisited}. J. Math. Sci. Univ. Tokyo, 10 (2003), 261–277.

\bibitem{cub1} S. Kusuoka. {\em Approximation of expectations of diffusion processes based on Lie algebra
and Malliavin calculus}. UTMS, 34, 2003.

\bibitem{Lobry} C. Lobry, {\em Controlabilite des systemes non lineaires}, SIAM J. Control 8 (1970),  573-605. 

\bibitem{cub2} T. Lyons and N. Victoir. {\em Cubature on Wiener space}. Proc. Royal Soc. London, 468:
169–198, 2004.

\bibitem{Nee} C. Nee. {\em Sharp gradient bounds for the diffusion semigroup}. PhD Thesis, Imperial College London, 2011. 

\bibitem{cub3} S. Ninomyia and N. Victoir. {\em Weak approximation scheme of stochastic differential
equations and applications to derivatives pricing}. Applied Mathematical Finance, 15(2):107–
121, 2008.

\bibitem{mythesis}
M.~Ottobre. \emph{Asymptotic Analysis for Markovian models in non-equilibrium Statistical Mechanics}, Ph.D Thesis, Imperial College London, 2012. 

\bibitem{MV_I} 
M.~Ottobre, V.~Kontis, B.~Zegarli\'nski. \emph{Markov semigroups with hypocoercive-type generator in infinite dimensions: ergodicity and smoothing}, Journal of Functional Analysis, 2016.

\bibitem{Sussman} H.J. Sussmann. {\em Orbits of families of vector fields and integrability of distributions}. Transactions
of the American Mathematical Society, Vol 180, 1973.

\bibitem{V}
C.~Villani,
\newblock Hypocoercivity.
\newblock { Mem. Amer. Math. Soc.}, 202 (950) 2009.

\end{document}